\documentclass[a4paper,12pt]{article}

\usepackage[utf8]{inputenc} 
\usepackage[T1]{fontenc}    
\usepackage[english]{babel} 
\usepackage{a4wide}         
\usepackage{hyperref}
\usepackage{amsmath, amssymb, amsthm, amsfonts , stmaryrd} 
\usepackage{mathrsfs}  
\usepackage{bbold}     
\usepackage{bigints}   
\usepackage{upgreek}   
\usepackage{systeme}   
\usepackage{calc}      

\usepackage{graphicx}
\graphicspath{{./images/}} 
\usepackage[font=small,labelfont=bf]{caption} 
\usepackage{subcaption} 

\usepackage{hyperref}

\newtheoremstyle{custom_theorem} 
  {10pt} 
  {10pt} 
  {\itshape} 
  {} 
  {\bfseries} 
  {.} 
  { } 
  {} 

\newtheoremstyle{custom_remark} 
  {10pt} 
  {10pt} 
  {\normalfont} 
  {} 
  {\bfseries} 
  {:} 
  { } 
  {} 

\theoremstyle{custom_theorem}
\newtheorem{theorem}{Theorem}[section]
\newtheorem{proposition}[theorem]{Proposition}
\newtheorem{lemma}[theorem]{Lemma}

\newtheorem{definition}[theorem]{Definition}

\theoremstyle{custom_remark}
\newtheorem{remark}{Remark}[section]
\newtheorem{example}{Example}[section]

\numberwithin{equation}{section}

\DeclareMathOperator{\tr}{Tr}

\DeclareMathOperator{\supp}{supp}

\newcommand{\diam}{\mathrm{diam}}

\newcommand{\norm}[1]{\left\Vert #1 \right\Vert}
\renewcommand{\P}{\mathbb{P}}
\newcommand{\Z}{\mathbb{Z}}
\newcommand{\E}{\mathbb{E}}
\newcommand{\N}{\mathbb{N}}
\newcommand{\1}{\mathbb{1}}
\newcommand{\C}{\mathbb{C}}

\newcommand{\CC}{\mathcal{C}}
\newcommand{\M}{\mathcal{M}}

\newcommand{\A}{\mathcal{A}}
\newcommand{\B}{\mathcal{B}}
\newcommand{\F}{\mathcal{F}}

\newcommand{\dd}{\mathrm{d}}

\DeclareUnicodeCharacter{2212}{\ensuremath{-}}

\begin{document}

\author{Alexis Imbert \thanks{Universit\'{e} de Bordeaux, Institut de Math\'{e}matiques de Bordeaux, 351 Cours de la Lib\'{e}ration, 33400 Talence, France. Email : \href{mailto:alexis.imbert@math.u-bordeaux.fr}{alexis.imbert@math.u-bordeaux.fr}}}

\title{A random matrix approach to lamplighter groups}
\date{\today}
    \maketitle
    \begin{abstract}
        Let $\Lambda$ be a finitely generated abelian group and $\Gamma=\Z^{*d}$, we study the Cayley graph of the wreath product $G=\Lambda\wr\Gamma$ with natural set of generators and their inverse $S$. First, we establish a random matrix model $X_N=\sum_sX^{(s)}_N$ where the sum is indexed by the set $S$. As the size $N$ of the matrices goes to infinity, the traffic distribution of the $X^{(s)}_N$'s converges to that of the image of these generators in the reduced $C^*$-algebra of $G$. In particular, the spectral measure of $X_N$ converges toward that of the Cayley graph of $G$ with generators $S$. Moreover, in the case $\Gamma=\Z$, we establish a central limit theorem for linear statistics of this random matrix model.
        Then, we exhibit a formula for the asymptotic $R$-transform and derive the second-order distribution of the limit of $X_N$ in terms of its limiting first-order distribution.
    \end{abstract}
    
	\section{Introduction} 

Studying the spectra of finitely generated non commutative groups has been a challenging problem launched by Kesten in \cite{kesten_symmetric_1959}. Let $G$ be a group generated by $S\subset G$, a finite subset of generators with $S=S^{-1}:=\{s^{-1},\,s\in S\}$. We are interested in the spectral measure of the Markov operator $M\in \ell^{2}(G)$ associated with the simple random walk on $G$. There are few example of computation of the spectra of graphs and even fewer examples of computation of their spectral measures (see the survey \cite{mohar_survey_1989})

Let $\Gamma=\Z^{* d}$ be the free group with $d$ generators and $\Lambda$ be a finitely generated abelian group. The wreath product $G:=\Lambda\wr\Gamma$ is $(\bigoplus_{\Gamma}\Lambda)\rtimes \Gamma$ equipped with the action of $\Gamma$ on $\bigoplus_{\Gamma}\Lambda$ by a shift. In other words, an element $h\in G$ is a couple $h=(f,g)$ where $f:\Gamma\rightarrow \Lambda$ has finite support and $g\in\Gamma$. The multiplication is given by 
$$(f,g)\cdot(f',g')=(f\cdot(f'\circ g^{-1}),gg'),$$
where $f\circ g^{-1}(\gamma)=f(g^{-1}\gamma)$ .
Note that there are natural inclusions of $\Gamma$ and $\Lambda$ into $G$ through the maps $\lambda\rightarrow\hat{\lambda}=( \mathbf{1}_{\lambda},e_{\Gamma})$, where $ \mathbf{1}_{\lambda}(e_{\Gamma})=\lambda$ and $\mathbf{1}_{\lambda}(\gamma)=e_{\Lambda}$ if $\gamma\neq e_{\Gamma}$; and $\gamma\rightarrow\hat{\gamma}=(e,\gamma)$, where $e=e_{\oplus_{\Gamma}\Lambda}$.

The wreath product of two groups is studied in physics literature from the random Schrödinger operator point of view, see \cite{arras} and the references therein for a more physical approach to this problem. The standard example is that of $\Gamma=\Z$ and $\Lambda =\Z/2\Z$, and is called the lamplighter group. The name of the group comes from viewing it as an infinite sequence of lamps on $\Z$ which may be on or off (determined by $f$) and with a lamplighter being at position $g\in\Z$. We thus refer to $\Lambda$ as the lamp group and $\Gamma $ as the base group.

It has been shown in \cite{grigorchuk_lamplighter_2001} that for a specific choice of generators, the spectral measure of the lamplighter group is discrete and can be explicitly stated. Moreover, there has been some studies on the spectrum on the Markov operator $M_{P}$ associated to some symmetric distribution $P$ on the set of generators (see \cite{grigorchuk_spectra_2019}) and it shows that the nature of the spectral measure is heavily dependent on the choice of the generators. One can also look at \cite{Revelle2003,DicksSchick2002,KambitesSilvaSteinberg2005} for further reading. However, to the author's knowledge, little is known about the spectrum of the Markov operator for the natural set of generators.

Let $m\geq 0$, $r_2\geq0$ and $q_1,\cdots,q_t\geq 3$ be fixed throughout the paper so that $$\Lambda\cong \Z^{m}\times \left(\Z/2\Z\right)^{r_2}\times\Z/q_1 \Z\times \cdots\times \Z/q_t \Z,$$ where we distinguished $\Z/2\Z$ from the other finite abelian groups for reasons that will become clear later. Let $S_\Lambda$ be the natural set of generators of $\Lambda$ and their inverse, i.e. $(1,0,\cdots,0),(0,1,\cdots,0)$ and so on with their inverse. Moreover, let $S_\Gamma$ be the natural set of generators of $\Gamma=\Z^{* d}$ with their inverse. We consider throughout this paper the set of generators $S=\hat{S_\Gamma}\sqcup\hat{S_{\Lambda}}\subset G$.

The Markov operator $M:\ell^{2}(G)\rightarrow\ell^{2}(G)$ associated to the simple random walk on the Cayley graph of $G$ with generators $S$ is  
\begin{equation}
    M\phi(h)=\frac{1}{|S|}\sum_{s\in S}\phi(sh),
\end{equation}
where $\phi\in\ell^{2}(G)$ and $h\in G$.
As the operator $M$ is bounded ($\norm{M}\leq1$) and self-adjoint it has a spectral decomposition 
\begin{equation}
    M=\int_{-1}^{1}\lambda\dd E(\lambda),
\end{equation}
where $E$ is the spectral measure. This measure is defined on the Borel subsets of the interval $[-1; 1]$ and takes values in the projections on the Hilbert space $\ell^{2}(G)$. 
 
The spectral measure $\mu$ associated to this operator on the interval is defined as
\begin{equation}
    \mu(B)=\langle E(B)\delta_{e_G},\delta_{e_G}\rangle,
\end{equation}
where $B$ is a Borel subset of $[-1; 1]$ and $\delta_{e_G}\in\ell^{2}(G)$ is the function which equals 1 at the identity element and 0 elsewhere. 
    
Let $p_{n}(e_G,e_G)$ be the probability of return to the null element after n steps of the simple random walk. Then we have : 
\begin{equation}\label{self-returning paths}
    p_{n}(e_G,e_G)=\int_{-1}^{1}\lambda^{n}\dd\mu(\lambda).
\end{equation}

We exhibit a random matrix model whose spectral measure approaches $\mu$ as the size of the matrices go to infinity. This approach is quite common when studying the spectral measure of a graph (not necessarily the Cayley graph of some group) as the measure of cartesian, star, rooted and free products of graphs can be understood as different notions of independence in non-commutative probability (see \cite{accardi_decompositions_2006}, \cite{speicher_universal_1997}). Here, we investigate the link between wreath product of groups and the notion of traffic independence and freeness over the diagonal.

\paragraph{The random matrix model.} Let $N\geq 1$ be the size of all matrices to be mentioned. Let $V_1,\ldots,V_d$ be i.i.d. permutation matrices and let $D_1,\ldots,D_m,D'_1,\ldots,D'_{r_2},\Delta_1,\ldots\Delta_t$ be diagonal matrices whose entries are all independent and independent from the $V_i$'s. The entries of $D_1,\ldots,D_m$ are i.i.d. and follow a uniform law on the unit circle $\mathbb{T}$. The entries of the $D'_j$'s are i.i.d. and follow a uniform law on $\{\pm1\}$ and the entries of $\Delta_j$ follow a uniform law on $\mathbb{U}_{q_j}:=\{\exp(2ik\pi/q_j),\,1\leq k \leq q_j\}$. We define 
\begin{equation}\label{eq: matrix model}
    X_N:=\frac{1}{2d+2m+2t+r_2}\left(\sum_{i=1}^{d}(V_i+V_i^{*})+\sum_{j=1}^{m}(D_j+D_j^*)+\sum_{j=1}^{t}(\Delta_{j}+\Delta_j^*)+\sum_{r=1}^{r_2}D'_r\right).
\end{equation}
In the case of the lamplighter group, it reduces to $X_N=1/3(V+V^*+D')$. Note that $2d+2m+2t+r_2=|S|$, where $S$ is the natural set of generators and their inverse of $G$. The group $\Z/2\Z$ is distinguished since its generator is its own inverse, thus not adding an element to the set $S$.

We use the tools and methods developed in \cite{male_traffic_2020} to compute the moments of $X_N$. Theorem \ref{measure convergence} states that the empirical spectral measure of $X_{N}$ converges to the spectral measure of the Cayley graph of the lamplighter group $\mu$. 
\begin{theorem}\label{measure convergence}
    Let $\mu_N$ be the empirical spectral distribution of $X_N$, weakly, in probability, $\mu_N\rightarrow\mu$ as $N$ goes to infinity.
\end{theorem}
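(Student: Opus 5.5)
Since all measures involved are supported in $[-1,1]$ (indeed $\norm{X_N}\leq 1$ deterministically, because each summand is unitary), weak convergence in probability of $\mu_N$ to $\mu$ reduces to convergence of moments in probability. By Weierstrass approximation on $[-1,1]$ it suffices to show, for each fixed $n\geq 1$, that
$$\E\left[\tfrac1N\tr X_N^n\right]\longrightarrow \int\lambda^n\dd\mu(\lambda)=p_n(e_G,e_G)
\qquad\text{and}\qquad
\mathrm{Var}\left(\tfrac1N\tr X_N^n\right)\longrightarrow 0 .$$
The first statement then yields convergence in expectation, and Chebyshev's inequality upgrades it to convergence in probability.

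For the expectation, expand $X_N^n$ as $|S|^{-n}$ times a sum over words $w=s_1\cdots s_n$ in the alphabet of matrices $\{V_i,V_i^*,D_j,D_j^*,\Delta_j,\Delta_j^*,D'_r\}$, in bijection with $S$. The claim is then that for each word $w$,
$$\tfrac1N\E\tr(w)\longrightarrow \1_{\{w=e_G \text{ in } G\}}.$$
Summing over words gives exactly the number of closed walks of length $n$ at $e_G$, divided by $|S|^n$, which is $p_n(e_G,e_G)$.

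To prove this claim, write $\tfrac1N\tr(w)=\tfrac1N\sum_{x}\langle w e_x,e_x\rangle$. For fixed $x$, apply the letters in turn: the permutations move a walker along a path $x=x_0,x_1,\dots$, while each diagonal letter multiplies by the lamp value $D_j(x_k)$ (or $\Delta_j(x_k)$, $D'_r(x_k)$, or their conjugates) at the walker's current site. The base-group part $g\in\Z^{*d}$ of $w$ is the reduced word of its permutation letters. By the standard fact for independent uniform permutations (or via the traffic-distribution framework of \cite{male_traffic_2020}), with probability $1-O(1/N)$ uniformly in $x$ the ball of radius $n$ around $x$ in the Schreier graph of $(V_1,\dots,V_d)$ is a tree. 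On that event:
\begin{itemize}
\item the walk returns to $x$ if and only if $g=e_\Gamma$;
\item the positions $x_k$ are in bijection with the elements $\gamma_k\in\Gamma$ reached by the prefixes.
\end{itemize}
The diagonal factors then group by site. At site $x_k\leftrightarrow\gamma$, the accumulated product of lamp variables has expectation $1$ if the total exponent of each lamp coordinate is trivial (zero for $\T$, zero mod $q_j$ for $\mathbb{U}_{q_j}$, even for $\pm1$), and expectation $0$ otherwise. The choice of laws is exactly what is needed here: $\E[u^k]=\1_{k=0}$ for $u$ uniform on $\T$, and similarly for the finite cyclic cases. Independence across sites and coordinates then shows that the expectation equals the indicator that $f\equiv e$ and $g=e_\Gamma$, that is, $w=e_G$ via the multiplication rule $(f,g)(f',g')=(f\cdot f'\circ g^{-1},gg')$. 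The non-tree event contributes $O(1/N)$ since all entries are bounded by $1$.

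The main obstacle is making the tree-like-neighborhood step rigorous, jointly with conditioning on the lamp variables. Here I would proceed by first conditioning on the $V_i$'s, since the diagonal entries are independent of them, and computing the lamp expectation for a fixed realization of the path. I would then bound $\P(\text{cycle of length}\leq 2n \text{ through } x)=O(1/N)$ by the classical counting for random permutations.

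For the variance, write $\E[\tr(w)\tr(w')]-\E\tr(w)\,\E\tr(w')$ as a sum over pairs $(x,y)$. When $y$ lies outside the radius-$2n$ neighborhood of $x$ (which, for a fixed $x$, excludes only $O(1)$ choices of $y$), the two walks use disjoint lamp sites and, up to $O(1/N)$ corrections in the permutation structure, asymptotically independent permutation entries. This gives covariance $O(1/N)$ after normalization by $N^2$, hence $\mathrm{Var}\to0$, which concludes the proof.
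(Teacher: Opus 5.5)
Your argument is correct, and it takes a genuinely different route from the paper. The paper works inside Male's traffic framework. It quotes the asymptotic traffic independence of the families $\{V_i,V_i^*\}$, $\{D_j,D_j^*\}$, $\{\Delta_j,\Delta_j^*\}$ and $\{D'_r\}$, and it uses the limiting injective traffic distributions: directed lines for a uniform permutation, and single-vertex \emph{valid} components for the diagonal matrices. It then argues that for each word $\psi$ exactly one partition $\pi$ with $\mathcal{GCC}(T_\psi^\pi)$ a tree can contribute, with value $\1_{\psi \text{ self-returning}}$. Concentration comes from the factorization property of \cite[Theorem 1.8]{male_traffic_2020} plus Chebyshev.

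You instead condition on the permutations and use local tree-likeness of the Schreier graph of $(V_1,\dots,V_d)$ to identify the walker's sites with elements of $\Gamma$. You then compute the lamp expectation exactly, as a product of $\E[u^k]$ over (site, coordinate) pairs. In effect you prove that the ``lamplighter over a random Schreier graph'' converges locally to the Cayley graph of $G$. Your route is elementary and self-contained, and it gives explicit $O(1/N)$ rates for both the mean and the variance. The paper's route gives more: convergence of the whole traffic distribution of the family, not just of polynomials in $X_N$. It is also the machinery reused for the CLT and the operator-valued sections.

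One point in your variance step should be tightened. The radius-$2n$ neighbourhood of $x$ is random, so ``excluding $O(1)$ choices of $y$'' is not literally a split of the covariance sum. Argue instead as follows.
\begin{itemize}
\item For each fixed pair $x\neq y$, the event that both radius-$n$ balls are trees and $d(x,y)>2n$ has probability $1-O(1/N)$.
\item This uses invariance of the law of $(V_1,\dots,V_d)$ under simultaneous conjugation, which gives $\P(d(x,y)\leq 2n)\leq \E|B_{2n}(x)|/(N-1)$.
\item On that event, the conditional expectation of $\langle we_x,e_x\rangle\langle w'e_y,e_y\rangle$ given the permutations is exactly $\1_{w=e_G}\1_{w'=e_G}$, because the lamp sites are disjoint.
\item So no asymptotic independence of permutation entries is needed, and each off-diagonal covariance term is $O(1/N)$.
\end{itemize}
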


From Section 3 until the end of this paper, we focus on the case where $d=1$, or equivalently, where $\Gamma = \Z$. We show a central limit theorem on the linear statistics of $X_{N}$ using a method similar to that of \cite{benaych-georges_central_2014}
\begin{theorem}\label{TCL}
    Let $n,N\in\N$, and define
    \begin{equation}
		Z_{N}(n):=\frac{1}{\sqrt{N}}\left[\mathrm{Tr}(X_{N}^{n})-\E(\mathrm{Tr}(X_{N}^{n}))\right].
	\end{equation}
The family $(Z_{N}(n))_{n\in\N}$ converges to a Gaussian process as $N$ goes to infinity.
\end{theorem}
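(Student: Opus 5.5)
We prove convergence to a Gaussian process by the method of moments: show that the covariances $\E[Z_N(n_1)Z_N(n_2)]$ converge, and that the joint cumulants of order $p\geq 3$ of $(Z_N(n_1),\ldots,Z_N(n_p))$ vanish as $N\to\infty$. Since the $Z_N(n)$ are real (because $X_N$ is self-adjoint), this gives convergence of all finite-dimensional distributions to a centered Gaussian vector, which is the claim.

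First, we expand $\mathrm{Tr}(X_N^n)$ as a sum over words $s_1\cdots s_n$ in the letters $V,V^*,D_j,D_j^*,\Delta_j,\Delta_j^*,D'_r$. For each word we write the trace as a sum over closed index paths $(i_1,\ldots,i_n)\in[N]^n$. Following the traffic formalism of \cite{male_traffic_2020}, as in the proof of Theorem \ref{measure convergence}, we group the index tuples according to the partition $\pi$ of the vertex set that they induce. This writes $\mathrm{Tr}(X_N^n)$ as a sum over quotient graphs $T^\pi$ of the cycle, with injective traces $\tau^0_N[T^\pi]$. For a product of such traces we use the same decomposition on the disjoint union of cycles $C_{n_1}\sqcup\cdots\sqcup C_{n_p}$.

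The joint cumulant $\kappa_p(\mathrm{Tr}X_N^{n_1},\ldots,\mathrm{Tr}X_N^{n_p})$ then becomes a sum over partitions $\pi$ for which the quotient graph is connected. Indeed, if $\pi$ leaves two components apart, the corresponding injective expectations factorise up to lower order. The diagonal matrices have independent entries indexed by vertices. The permutation matrix $V$ has, by the standard estimate for uniform permutations, $\E[\prod V_{ij}]=N^{-\#\text{distinct edges}}(1+O(1/N))$ when the edge configuration is admissible: each vertex has at most one outgoing and one incoming $V$-edge, and all diagonal weights at every vertex multiply to $1$ on average. Counting the free vertices, a connected quotient contributes $N^{\#V(T^\pi)-\#E_V(T^\pi)}$. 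With $\#V-\#E\le 1-\text{(cycle rank)}+\ldots$, this yields
\[
\kappa_p\left(\mathrm{Tr}X_N^{n_1},\ldots,\mathrm{Tr}X_N^{n_p}\right)=O(N),
\]
so $\kappa_p(Z_N)=O(N^{1-p/2})\to0$ for $p\geq3$. For $p=2$ we obtain an $O(N)$ covariance. The limit of its normalisation is the sum over connected quotients of $C_{n_1}\sqcup C_{n_2}$ achieving exponent exactly $1$. For these quotients the $V$-skeleton is a tree, or unicyclic of a controlled form, and the $1/N$ corrections to permutation moments also contribute; both converge.

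The main obstacle is the power counting for permutation matrices. Unlike Wigner matrices, uniform permutation entries are not independent, and their joint moments carry correlations of order $1/N$ that can feed into the leading order of the cumulant. We also have to justify that the genuinely connected contributions are at most $O(N)$, even though $\Gamma=\Z$ forces $V$-cycles in the quotient graph. For this we would use the known representation of $\E[V_{i_1j_1}\cdots V_{i_kj_k}]=\frac{(N-k)!}{N!}$ on admissible configurations. Expanding $\frac{(N-k)!}{N!}$ around $N^{-k}$ and computing cumulants, we show that the disconnected parts cancel exactly at orders $N^{2}$ and above, as in \cite{benaych-georges_central_2014}. The diagonal contributions only restrict which configurations are admissible, through the constraint that the product of lamp variables at each vertex has nonzero mean. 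They do not affect the counting.
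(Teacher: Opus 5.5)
\textbf{Verdict.} Your route differs from the paper's and is viable. However, its decisive step is only asserted, and the reduction you state on the way is false as written.

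\textbf{The false reduction.} You say the joint cumulant reduces to connected quotients because disconnected ones ``factorise up to lower order''. For a uniform permutation, vertex-disjoint pieces are not independent, and the correction is not of lower order in the cumulant. Already for $p=2$, take two valid pieces whose $V$-skeletons are directed lines with $k_A$ and $k_B$ distinct $V$-edges. The kernel with no cross-identification contributes
\[
(N-k_A-k_B)(N-k_A-k_B-1)-(N-k_A)(N-k_B)=-(k_A+k_B+1)N+O(1)
\]
to the covariance, which is exactly the order of the connected gluings. For $p\ge 3$, $c$ unglued groups carry $N^{c}$ a priori, and you must show the cumulant gains a factor $N^{-(c-1)}$. ``Expanding $(N-k)!/N!$ around $N^{-k}$'' does not give this for general $p$. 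Citing \cite{benaych-georges_central_2014} does not help either: there the entries are independent, so configurations with no shared entries drop out of the cumulant exactly.

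\textbf{The missing lemma.} Index the expansion by the kernel $\pi$ on $C_{n_1}\sqcup\cdots\sqcup C_{n_p}$, and let $\bar\pi$ record which cycles get glued. This gives $c$ vertex-disjoint groups $G$, each with $v_G$ vertices and $k_G$ distinct $V$-edges. The lamp weight and the admissibility indicator are multiplicative over groups, so their product $w(\pi)$ factors out of the moment--cumulant sum, and
\[
\kappa_p=\sum_{\pi}w(\pi)\sum_{\rho\ge\bar\pi}\mu(\rho,1_p)\prod_{B\in\rho}\frac{(N-k_B)!}{(N-v_B)!},
\]
where $k_B$ and $v_B$ are additive over the groups in $B$. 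Write
\[
\frac{(N-k)!}{(N-v)!}=N^{v-k}\exp\Bigl(\sum_{m\ge1}N^{-m}Q_m(k,v)\Bigr),
\]
where $Q_m$ is a polynomial of degree $m+1$. Each order-$N^{-m}$ term therefore couples at most $m+1$ groups. A cluster (hypergraph) expansion then bounds the inner sum by $O\bigl(N^{\sum_G(v_G-k_G)-(c-1)}\bigr)$. Since $v_G-k_G\le 1$ for a connected admissible skeleton, this gives $\kappa_p=O(N)$, and the rest of your argument goes through.

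\textbf{Comparison with the paper.} The paper never needs this estimate.
\begin{itemize}
\item It expands moments of centred injective traces and splits the test graphs into valid and non-valid ones.
\item The lamp variables force non-valid graphs to be glued in blocks of size at least two.
\item For valid graphs it uses the exact identity expressing joint moments of injective traces of lines and cycles through the cycle counts $\eta_j$, together with their Poisson limit.
\item This makes each valid factor $N(k(i)-h(i))=O(1)$, so permutation fluctuations are invisible at scale $\sqrt N$.
\item Only pairings of non-valid graphs survive, which gives Wick's formula with the explicit covariance $M^{(2)}$.
\end{itemize}
Your approach, once the lemma is added, gives a uniform cumulant bound with no valid/non-valid split and no Poisson input. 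The cost is that it leaves the covariance implicit.

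\textbf{Your description of the covariance.} It is also off in two places. Unicyclic quotients contribute $O(1)$, not order $N$. For valid pieces, the term $-(k_A+k_B+1)N$ above cancels exactly against the $k_A+k_B+1$ overlapping line gluings, each contributing $N+O(1)$. What survives is precisely the paper's answer: pairs of non-valid pieces glued into a valid line.
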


Theorem \ref{measure convergence} and Theorem \ref{TCL} are proven in Section 2 and 3 respectively.

 The second part of this article establishes two main results whose precise statements require the operator-valued framework developed in Sections 4 to 6 : Propositions \ref{analytic R transform} and \ref{prop: second-order distrib}. We state here the ideas and refer the reader to Sections 4 to 6 for a proper statement.
 
These three sections focus on establishing a limiting operator-valued $C^*$ probability space for this model. An operator-valued $R$-transform and the second-order distribution are then established. Operator-valued probability was introduced by Voiculescu in \cite{voiculescu_symmetries_1985} and Speicher showed in \cite{speicher_combinatorial_1998} that the combinatorial description of free cumulants also extends to operator-valued non-commutative probability spaces. In \cite{au_freeness_2021}, Au et al. showed that permutation invariant random matrices are asymptotically free over the diagonal, allowing us to use this kind of freeness in our model. Since lamplighter matrices do not converge to an operator-valued semi-circular variable, the $R$-transform is not linear as in the heavy-Wigner model (see \cite{bordenave2024largedeviationsmacroscopicobservables}). We prove in Section 5, Proposition \ref{analytic R transform} which provides an explicit formula for the asymptotic $R$ transform of the "limit" of $V_N+V_N^*$ in some operator-valued probability space that depends on the choice of the lamp group $\Gamma$.

     In the last section, we show that in this limiting space, we are able to express the second order distribution of two variables in terms of the first order of the product of the two, up to some symmetrization: see  Proposition \ref{prop: second-order distrib}.

    \section{Proof of Theorem \ref{measure convergence}}

In this section, we investigate the type of independence that links the matrices $V_,D,\Delta$ and $D'$, namely, traffic independence. We start by recalling a few definitions from \cite{male_traffic_2020} and \cite{male_limiting_2017} regarding traffic independance. We then show the convergence of the expectations of the moments of $\mu_{N}$ toward the moments of $\mu$. Finally, we show the convergence in probability.
 
\subsection{Traffic independence for random matrices}
    
	\begin{definition}
        Let $J$ be a set of indices and consider two families of formal variables $\mathbf{x}=(x_{j})_{j\in J}$ and $\mathbf{x^{*}}=(x_{j}^{*})_{j\in J}$.
	    \begin{enumerate}
	        \item A \emph{*-test graph} $T = (G, \gamma, \epsilon )$ in the variables $\mathbf{x}$ is a finite connected multi-digraph $G = (V , E, \mathrm{src}, \mathrm{tar})$ together with edge labels $\epsilon : E \rightarrow \{1,*\}$ and $\gamma : E \rightarrow J$. One can see the maps $\gamma$ and $\epsilon$ as indicating that an edge $e\in E$ is labeled $x_{\gamma(e)}^{\epsilon(e)}$. The maps $\mathrm{src}$, $\mathrm{tar} : E \rightarrow V$ specify the source, $\mathrm{src}(e)$ and target, $\mathrm{tar}(e)$ of each edge $e \in E$. 
            \item A \emph{*-graph monomial} $g=(T,v_{in},v_{out})$ is a *-test graph $T$ together with the data of two vertices  $v_{in}$ and $v_{out}$ in $V$. We refer to the roots $(v_{in}, v_{out}) \in V^{2}$ as the input and the output, respectively, though they need not be distinct. We denote by $\mathcal{G}\langle\mathbf{x},\mathbf{x^*}\rangle$ the set of all such *-graph monomials which can be extended to $\C\mathcal{G}\langle\mathbf{x},\mathbf{x^*}\rangle$ the space of finite linear complex combinations of *-graph monomials in variable $\mathbf{x}$ (graphs are considered up to isomorphisms of graphs preserving labeled and in/outputs).
	    \end{enumerate}
	\end{definition}
    For a family $\mathbf{A}_{N}=(A_{N,j})_{j\in J}$ of random matrices, we define the evaluation of a *-graph monomial $g$ in the family $\mathbf{A}_{N}$ via the formula,
    \begin{equation}
        g(\mathbf{A}_{N})(i,j):=\sum_{\substack{\phi:V\rightarrow[N] \\ \phi(v_{out})=i,\phi(v_{in})=j}}\prod_{e\in E}A^{\epsilon(e)}_{N,\gamma(e)}(\phi(\mathrm{tar}(e)),\phi(\mathrm{src}(e))).
    \end{equation}
    For convenience, we will often denote $\phi(e)$ for $(\phi(\mathrm{tar}(e)),\phi(\mathrm{src}(e)))$.
    When one studies random matrices in the framework of free probability, one is interested in the expectation of the normalized trace of any polynomials evaluated in a family of random matrix. In this framework, the \emph{traffic distribution} of a family of random matrices is the data of the expectation of the normalized trace of any *-graph monomial evaluated in this family.

    \begin{definition}
        The \emph{traffic distribution} of a family $\mathbf{A}_{N}$ of random matrices is the map
        \begin{equation}
            \Phi_{\mathbf{A}_{N}}:g\in\C\mathcal{G}\langle\mathbf{x},\mathbf{x^*}\rangle\rightarrow\E\left[\frac{1}{N}\tr\left(g(\mathbf{A}_{N})\right)\right].
        \end{equation}
        We say that the family $\mathbf{A}_{N}$ \emph{converges in traffic distribution} if $\Phi_{\mathbf{A}_{N}}$ converges point-wise.
    \end{definition}

    \begin{remark}
        Note that for $g\in\mathcal{G}\langle\mathbf{x},\mathbf{x^*}\rangle$ a *-graph monomial, $\Phi_{\mathbf{A}_{N}}(g)=\Phi_{\mathbf{A}_{N}}(\Delta(g))$, where $\Delta(g)$ is obtained from $g$ by identifying (gluing) the vertices $v_{in}$ and $v_{out}$. Moreover, since we take the normalized trace, this quantity does not depend on the position of the vertex $v_{in}=v_{out}$ in $\Delta(g)$ but only on the *-test graph $T$ defined by $\Delta(g)=(T,v_{in},v_{in})$.
    \end{remark}
    The remark above motivates the following definitions.
    \begin{definition}
    Let $T=(V,E,\gamma,\epsilon)$ be a *-test graph , and $\mathbf{A}_{N}=(A_{N,j})_{j\in J}$ a family of random matrices.
    \begin{enumerate}
        \item  The \emph{trace} of $T$ evaluated in $\mathbf{A}_{N}$ is,
        \begin{equation}
            \tr\left(T(\mathbf{A}_{N})\right):=\sum_{\phi:V\rightarrow[N]}\prod_{e\in E}A^{\epsilon(e)}_{N,\gamma(e)}(\phi(e)).
        \end{equation}
        Note that $\tau_{N}(T(\mathbf{A}_{N})):=\E\left(\frac{1}{N}\tr(T(\mathbf{A}_{N}))\right)=\Phi_{\mathbf{A}_{N}}(g)$, where $g$ is any *-graph monomial verifying $\Delta(g) = (T,v_{in},v_{in})$ for some $v_{in}\in V$.
        
        \item The \emph{injective trace} of $T$ evaluated in $\mathbf{A}_{N}$ is,
        \begin{equation}
            \mathrm{Tr^{0}}\left(T(\mathbf{A}_{N})\right):=\sum_{\substack{\phi:V\rightarrow[N]\\\phi \,\mathrm{injective}}}\prod_{e\in E}A^{\epsilon(e)}_{N,\gamma(e)}(\phi(e)).
        \end{equation}
        Similarly, we define $\tau_{N}^{0}(T(A_{N})):=\E\left(\frac{1}{N}\mathrm{Tr^{0}}(T(\mathbf{A}_{N}))\right)$.
    \end{enumerate}
    The trace and injective trace of a *-test graph are related one to another in a similar fashion as cumulants are related to moments. Indeed, let $T=(V,E,\gamma,\epsilon)$ be a *-test graph and $\pi\in\mathcal{P}(V)$ be a partition of the vertices of $V$, we define a new *-test graph $T^{\pi}$ obtained from $T$ by identifying (gluing) the vertices of $T$ in a same block of $\pi$. Then for any *-test graph $T$, and any family $\mathbf{A}_{N}$ of random matrices, one has
    \begin{equation}
         \tr\left(T(\mathbf{A}_{N})\right)=\sum_{\pi\in\mathcal{P}(V)} \mathrm{Tr^{0}}\left(T^{\pi}(\mathbf{A}_{N})\right).
    \end{equation}
    The same relation holds when replacing $\tr$ (resp. $\mathrm{Tr^{0}}$) by $\tau_{N}$ (resp. $\tau_{N}^{0}$). Both of these relations can be inverted using the Möbius transform and one can thus obtain an expression of the injective trace of a *-test graph $T$ in terms of the trace of the $T^{\pi}$'s.
    \end{definition}

    \begin{remark}
        As shown in \cite[Lemma 2.9]{male_traffic_2020}, for a given family $\mathbf{A}_{N}$ of random matrices, the convergence in traffic distribution as defined above is equivalent to the convergence of $\tau_{N}(T(\mathbf{A}_{N}))$ for all *-test graph $T$ which is in turn equivalent to the convergence of $\tau^{0}_{N}(T(\mathbf{A}_{N}))$ for all *-test graph $T$. It is often easier to compute explicitly the limit in terms of the normalized trace, hence its introduction.
    \end{remark}

    We now give the limiting traffic distribution of permutation matrices and diagonal matrices.

    \begin{example}\label{diagonal}
        Let $D$ be a diagonal matrix with i.i.d. entries of law $\nu$. Let us compute the normalized trace of a *-test graph $T=(V,E,\varepsilon)$ in $D$ and see when it vanishes. Note that we do not need to add $\gamma$ since we are considering a family of only one matrix. We have 
        \begin{align*}
            \E\left[\frac{1}{N}\tr^{0}(T(D_{N}))\right] &=\E\left[\frac{1}{N}\sum_{\substack{\phi:V\mapsto[N],\\\text{injective}}}\prod_{e=(v,w)\in E}D^{\varepsilon(e)}(\phi(w),\phi(v))\right]\\*
            &=\1_{|V|=1}\E\left[D(1,1)^{|E_1|}\overline{D(1,1)}^{|E_{*}|}\right],
        \end{align*}
        where $E_1:=\{e\in E,\,\varepsilon(e)=1\}$ and $E_*=E\setminus E_1$.
        When $\nu$ is a Rademacher law, $\tau_{N}^{0}(T(D_{N}))=\1_{|V|=1,\,|E|\text{ even}}$. When $\nu$ is the uniform law on $\mathbb{U}_q$ the set of $q$-th roots of the unity, then $\tau_{N}^{0}(T(D_{N}))=\1_{|V|=1,\,|E_1|-|E_*|=0\,\mathrm{mod} \,q}$. When $\nu$ is the uniform law on $\mathbb{T}$, then $\tau_{N}^{0}(T(D_{N}))=\1_{|V|=1,\,|E_1|-|E_*|=0}$.
    \end{example}

    \begin{example}\label{uniform}
    Let $V$ be a uniform permutation matrix, it is shown in \cite{male_traffic_2020} that $\tau_{N}^{0}(T(V))\rightarrow\1_{T\text{ is a directed line}}$. A directed line $T$ is a connected graph where the vertices $1,\cdots,K$ are linked by edges $(1,2),\cdots,(K-1,K)$ labeled $v$ with arbitrary multiplicity, and by edges  $(K,K-1),\cdots,(2,1)$ labeled $v^{*}$, also with arbitrary multiplicity. We implicitly denoted the edges $e=(v,w)$ where $v$ is the source ($src(e)=v$) and $w$ is the target ($tar(e)=w$).

    Note that, from this information, we can compute the limiting *-moments of $V$. Let $P$ be a *-monomial in $v,v^{*}$, $P$ can be seen as a directed path labeled by $v$ and $v^*$ respecting the order of the product. For instance, $P=v^{2}v^{*}v^{2}(v^{*})^{3}$ is represented by the graph-monomial in Figure \ref{fig:exemple1},
    \begin{figure}[ht] 
        \centering \includegraphics[width=0.5\textwidth]{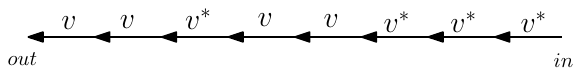}  
        \caption{The graph-monomial representation of $P$.}  
        \label{fig:exemple1} 
    \end{figure}
    
    \noindent where the input on the right-hand side and output on the left-hand side. Since for any graph-monomial $g$, $\tr(g)=\tr(\Delta(g))$, we only need to consider the graph-monomial of Figure \ref{fig:exemple2} in order to compute the asymptotic $*$-moments of $V_{N}$.
    \begin{figure}[ht] 
        \centering
        \includegraphics[scale=0.7]{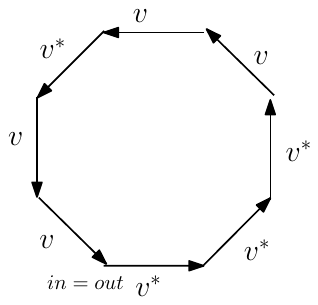} 
        \caption{The graph-monomial $\Delta(P)$.} 
        \label{fig:exemple2} 
    \end{figure}
    
    It is easy to see that if $P$ is balanced (i.e. it has as many $v$ labels as $v^{*}$ labels), then there is a unique partition $\pi$ such that $g_{P}^{\pi}$ is a directed line. Otherwise, no such partition exists. 
    \end{example}

    \begin{definition}[Graph of colored components]
        \begin{enumerate}
            \item Let $T$ be a *-test graph in the variables $\mathbf{x}=\mathbf{x_{1}}\cup\cdots\cup\mathbf{x_{p}}$, where the $\mathbf{x_{j}}$’s are families of pairwise disjoint variables (a variable appears at most in one family). A colored component of $T$ with respect to $\mathbf{x_{1}},\cdots,\mathbf{x_{p}}$ is a maximal connected sub-graph of $T$, with at least one edge, whose edges are labeled by variables in only one family among $\mathbf{x_{1}},\cdots,\mathbf{x_{p}}$. We denote by $\mathcal{CC}(T)$ the set of colored components of T with respect to $\mathbf{x_{1}},\cdots,\mathbf{x_{p}}$.
            \item The \emph{graph of colored components} of $T$ with respect to $\mathbf{x_{1}},\cdots,\mathbf{x_{p}}$, denoted $\mathcal{GCC}(T)$, is the following bipartite undirected graph.
            \begin{itemize}
                \item The first kind of vertices are the colored components $T_{1},\cdots, T_{K}$ of $\mathcal{CC}(T)$.
                \item The second kind of vertices are the vertices $v_1,\cdots,v_{L}$ of $T$ that belong to at least two graphs among $T_{1},\cdots, T_{K}$.
                \item There is an edge between $T_{i}$ and $v_j$ if $v_j$ is a vertex of $T_i$, $i=1,\cdots,K,\,j=1,\cdots, L$.
            \end{itemize}
        \end{enumerate}
    \end{definition}

    \noindent Recall that $\tau_{N}(T(\mathbf{A}_{N})):=\E\left(\frac{1}{N}\tr(T(\mathbf{A}_{N}))\right)$ and $\tau^{0}_{N}(T(\mathbf{A}_{N})):=\E\left(\frac{1}{N}\mathrm{Tr^{0}}(T(\mathbf{A}_{N}))\right)$.
    \begin{definition}
        We say that $\mathbf{A}_{N}^{(1)},\cdots,\mathbf{A}_{N}^{(L)}$ are \emph{asymptotically traffic independent} whenever $\mathbf{A}_{N}:=\mathbf{A}_{N}^{(1)}\cup\cdots\cup\mathbf{A}_{N}^{(L)}$ converges in traffic distribution and for any *-test graph $T=(V,E,\gamma,\epsilon)$,
        \begin{equation}\label{traffic freeness}
            \tau_{N}\left(T(\mathbf{A}_{1}^{(N)},\cdots,\mathbf{A}_{L}^{(N)})\right)\underset{N\rightarrow\infty}{\longrightarrow}\sum_{\substack{\pi\in\mathcal{P}(V) \\ \mathrm{s.t.}\mathcal{GCC}(T^{\pi}) \\ \text{is a tree}}}\prod_{S\in\mathcal{CC}(T^{\pi})}\lim_{N\rightarrow\infty}\tau_{N}^{0}\left(S(\mathbf{A}_{N}^{(\ell(S))})\right),
        \end{equation}
    where $\ell(S)$ is the index of the family of the labels of the colored component $S$. Since the injective traces in the product can also be written in terms of (non-injective) traces, asymptotic traffic independence entirely characterizes the limiting traffic distribution of $\mathbf{A}_{N}=\mathbf{A}_{N}^{(1)}\cup\cdots\cup\mathbf{A}_{N}^{(L)}$ in terms of the traffic distributions of the $\mathbf{A}_{N}^{(l)}$'s.
    \end{definition}

\subsection{Limiting moments of $\mu_{X_{N}}$}

    In \cite{male_traffic_2020}, it is shown that the families $\{V_1,V_1^*\},\ldots,\{V_d,V_d^{*}\},\{D_1,D_1^{*}\},\ldots,\{D_m,D_m^*\}$,\newline $\{\Delta_1,\Delta_1^*\},\ldots,\{\Delta_t,\Delta_t^{*}\},\{D_1'\},\ldots,\{D_{r_2}\}$ are asymptotically traffic independent.

    To avoid carrying an unnecessary factor for the rest of the proof, we study the matrix $X'_N:=(2d+2m+2t+r_2)X_{N}$. We compute the expectation of the limiting moments of $\mu_{N}$, its spectral measure, using traffic independence. 
    Let $n\in\N$ and $x$ be a formal variable. Let $T_{n}$ be the *-test graph consisting of a simple cycle of length $n$ where each edge is labeled $x$ (i.e. $\forall\,e\in E,\gamma(e)=x,\epsilon(e)=1$). We have
    \begin{equation}
        \E\left[\int t^{n}\dd\mu_{X'_N}(t)\right]=\E\left[\frac{1}{N}\tr((X'_N)^{n})\right]=\tau_{N}(T_{n}(X'_N)).
    \end{equation}
    Note that there is a one to one correspondence between matrices in the sum defining $X_N$ in Equation \eqref{eq: matrix model} and elements of $S$ the natural generating set of $G$. We often identify elements of $S$ with their matrix counterpart when no confusion is possible.
    We first develop the $n$-th power of $X'_N$ as follows.
    \begin{align}
        \E\left[\frac{1}{N}\tr((X'_N)^{n})\right]&=\sum_{\psi:[n]\rightarrow S}\E\left[\frac{1}{N}\tr(\psi(1)\cdots \psi(n))\right],\\
        &=\sum_{\psi:[n]\rightarrow S}\tau_{N}(T_{\psi}(\mathbf{V},\mathbf{D},\mathbf{\Delta},\mathbf{D'})),
    \end{align}
    where, $T_{\psi}$ is a cycle of length $n$ but whose edges are labeled by $\psi$ and bold capital letters stand for the family of the corresponding matrices. We often get rid of those in the following. We now use Equation \eqref{traffic freeness} and Examples \ref{diagonal} and \ref{uniform} to compute the limit of each $\tau_{N}(T_{\psi})$. For a colored component $C$, we denote $\ell(C)=1$ if it is labeled by some $v_i,v_i^{*}$ and $\ell(C)=2$ otherwise, then we have
    \begin{equation}\label{lamplighter}
        \lim_{N\rightarrow\infty}\tau_{N}(T_{\psi})=\sum_{\substack{\pi\in\mathcal{P}(V_{\psi}),\\\mathcal{GCC}(T_{\psi}^{\pi})\\ \text{ is a tree}}}\prod_{\substack{C\in\mathcal{CC}(T_{\psi}^{\pi}),\\\ell(S)=1}}\1_{C\text{ is a directed line}}\prod_{\substack{C\in\mathcal{CC}(T_{\psi}^{\pi}),\\\ell(S)=2}}\1_{|V_{S}|=1,E_{S} \text{ is valid}},
    \end{equation}
    where we define the validity of a test graph as follows.
    
    \begin{definition}\label{def: valid}
        A test-graph $T$ labeled by one of the families $\{d_j,d_j^*\}_{1\leq j \leq m},$ $\{\delta_j,\delta_j^*\}_{1\leq j \leq t},$ $\{d'_j\}_{1\leq j \leq r_2}$ is valid whenever either,$(i)$ it is labeled by some $d_j$ and $|E_1|-|E_*|=0$, either $(ii)$ it is labeled by some $\delta_j$ and $|E_1|-|E_*|=0\,\mathrm{mod}\,q_j$, or $(iii)$ it is labeled by some $d'_j$ and $|E|=0[2]$. 
    \end{definition}
    Note that $\psi$ encodes a path on the Cayley graph of $G$ with generators $S$. Let us now show that if $\psi$ encodes a self-returning path of length $n$, then there is a unique $\pi\in\mathcal{P}(V_{\psi})$ that satisfies all the conditions of Equation \eqref{lamplighter}, and if $\psi$ is not a self-returning path, then there is no partition satisfying all the conditions.
    \begin{proposition}
        We have 
        \begin{equation}
            \lim_{N\rightarrow\infty}\E\left[\int t^{n}\dd\mu_{X'_N}(t)\right]=|S|^{n}p_{n}(id,id)=|S|^{n}\int t^{n}\dd\mu(t).
        \end{equation}
    \end{proposition}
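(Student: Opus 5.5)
Starting from the expansion $\E[\frac1N\tr((X'_N)^n)]=\sum_{\psi:[n]\to S}\tau_N(T_\psi)$ and the limit formula \eqref{lamplighter}, it suffices to prove the following for each word $\psi$. If the product $\psi(1)\cdots\psi(n)$, read as a word in $G$, equals $e_G$, then exactly one partition $\pi$ of $V_\psi$ satisfies all the conditions of \eqref{lamplighter}. Otherwise no partition does. Then each self-returning word contributes $1$, and the number of such words is $|S|^n p_n(e_G,e_G)$. Combining this with \eqref{self-returning paths} gives the claim.

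\textbf{Step 1: the $\Gamma$-components.} Label the vertices of the cycle $T_\psi$ by $0,\dots,n$ cyclically. The walk $\psi$ projects to a walk $g_0=e,g_1,\dots,g_n$ in $\Gamma=\Z^{*d}$, obtained by keeping only the $v_i^{\pm}$ steps; lamp steps do not move $g$. I propose the canonical partition $\pi_\psi$: identify vertices $k$ and $l$ iff $g_k=g_l$ and, in $G$, the lamp configurations $f_k$ and $f_l$ agree. I will argue that only the position matters, so $\pi_\psi$ is simply ``same position in $\Gamma$''.

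\textbf{Step 2: existence.} Assume $\psi$ is self-returning, and set $\pi_\psi=\{k\sim l \iff g_k=g_l\}$. Then:
\begin{itemize}
\item Each vertex of $T_\psi^{\pi}$ is a point of the Cayley tree of $\Gamma$.
\item The $v_i$-edges form, for each $i$, unions of lines along cosets of $\langle v_i\rangle$. These are directed lines, since the $v_i$ edges go forward and the $v_i^*$ edges backward.
\item The lamp edges become loops at a single vertex, so each lamp component has one vertex.
\end{itemize}
Validity of a lamp component at the vertex $\gamma$ for generator $\lambda$ means exactly that the net count of that lamp generator applied at position $\gamma$ is $0$ in the corresponding cyclic factor of $\Lambda$. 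This holds because the final lamp configuration $f_n$ is trivial. Finally, $\mathcal{GCC}$ is a tree because the graph of $\Gamma$-positions visited is a subtree of the free-group Cayley tree, each edge colour class is a line segment, and the lamp loops hang off single vertices. Conversely, if $\psi$ returns in $\Gamma$ but some $f_n(\gamma)\neq e_\Lambda$, then validity fails under this $\pi$.

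\textbf{Step 3: uniqueness and non-existence.} This is the main obstacle. Let $\pi$ satisfy the conditions. I would show by induction along the cycle that $\pi$ must identify vertices exactly as $\pi_\psi$ does. The tools are the following.
\begin{itemize}
\item The $\mathcal{GCC}$-tree condition forces any cycle in $T_\psi^\pi$ to stay inside a single coloured component.
\item Directed lines contain no nontrivial cycles except back-and-forth traversals.
\item Lamp components are single vertices.
\end{itemize}
Hence the quotient graph, with the lamp loops removed, is a tree whose edges are $v_i^{\pm}$-labelled consistently. This gives a graph morphism to the Cayley tree of $\Gamma$ that is injective, since a non-injective one would create two $v_i$-edges out of one vertex, contradicting the directed line property, or would close a cycle across colours.

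So $\pi$ is determined by the positions, which gives $\pi=\pi_\psi$. The base walk must close up, so $g_n=e$. Validity of each lamp component then forces every lamp value to be trivial, so $f_n$ is trivial and $\psi$ is self-returning. The delicate point to check is that distinct lamp families at the same vertex form separate components, each required to be valid individually. This matches the product structure $\bigoplus_\Gamma\Lambda$ with $\Lambda$ a direct product of cyclic groups, so the validity conditions (i)–(iii) of Definition \ref{def: valid} correspond exactly to $f_n(\gamma)=e_\Lambda$ coordinatewise.
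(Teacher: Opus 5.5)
Your proposal is correct and takes essentially the same route as the paper. In both, the only partition that can contribute identifies vertices of the cycle by their position in $\Gamma$, with the lamp edges collapsed to loops. It exists exactly when the base walk closes, and validity of the lamp components is then equivalent to the final lamp configuration being trivial.

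Your Step 3 supplies the uniqueness argument that the paper only asserts by appeal to Example \ref{uniform}: the quotient is a tree, and the label-preserving map into the Cayley tree of $\Z^{*d}$ is locally injective, hence injective. You should delete the abandoned ``lamp configurations agree'' clause in Step 1, because under that partition the lamp edges would not be loops.
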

    \begin{proof}
        Let $n\in\N$ and $\psi:[n]\rightarrow S$ be a walk on the Cayley graph of the lamplighter group of length $n$ as described above. First, a connected component $\mathcal{S}$ with $\ell(\mathcal{S})=2$  may have a non-zero contribution at the limit only if it has only one vertex. Therefore,
        \begin{equation}
            \lim_{N\rightarrow\infty}\tau_{N}(T_{\psi}(V_{N},D_{N}))=\lim_{N\rightarrow\infty}\tau_{N}(T_{\psi}'(V_{N},D_{N})),
        \end{equation}
        where $T_{\psi}'$ is obtained from $T_{\psi}$ by identifying the source and target of all edges in $T_{\psi}$ labeled by $d,d'$ or $\delta$. 
        
        Now, $T'_{\psi}$ consists in a cycle of length $k\leq n$ labeled with $v_i$ for $1\leq i\leq d$ and decorated at each vertex with some (possibly none) loops labeled $d_j,d'_{j'}$ and $\delta_{l}$ for $1\leq j \leq m$, $1\leq j'\leq r_2$ and $1\leq l\leq t$,  such that there are $n-k$ such loops. We denote $\psi|^\Gamma$ the map $\psi$ co-restricted to the values $\hat{S_{\Gamma}}$, i.e. $v_i,v_i^*$. Combining Example \ref{uniform} and Equation \eqref{traffic freeness}, if $\psi|^\Gamma$ encodes a closed path on the Cayley graph of $\Z^{*d}$, there is a unique partition $\pi$ that (may or may not depending on the contribution of the diagonal matrices) contribute to the limit and otherwise the limit is zero. We thus have shown 
        \begin{equation}\label{balanced}
            \lim_{N\rightarrow\infty}\tau_{N}(T_{\psi}(V_{N},D_{N}))=\1_{\psi|^\Gamma\text{ encodes a closed path}}\lim_{N\rightarrow\infty}\tau^0_{N}((T_{\psi}')^{\pi}).
        \end{equation}
        Note that the condition $\psi|^\Gamma$ encodes a closed path is a required condition for a path on the Cayley graph of $G$ to be self-returning. Furthermore, for the limit on the right-hand side of \eqref{balanced} to not be zero, each connected component $\mathcal{S}$ of $\left(T_{\psi}'\right)^{\pi}$ with $\ell(\mathcal{S})=2$ must be valid in the sense given above, which is equivalent to the fact that at each point $\gamma$ of $\Gamma$, the sequence of $\lambda\in S_\Lambda$ such that the path uses the generator $\hat{\lambda}$ when at position $\gamma$ reduces to $e_\Lambda$, the neutral element of $\Lambda$. Hence, if $\psi$ is not a path from $id$ to $id$ on the Cayley graph of $G$, then either the induced walk on $\Gamma$ (by considering only edges with $\psi(e)\in \hat{S_\Gamma}$) is not self returning, either the induced walk on $\Gamma$ is self-returning (then there is a unique partition $\pi$ such that the injective trace may not be zero) but at least one vertex of the partitioned test-graph has a colored component which is not valid. In both cases the limit is zero. Now, if $\psi$ is self-returning, there exists a unique $\pi$ such that $(T_{\psi}')^{\pi}$ induces a closed walk on the Cayley graph of $\Gamma$ and every vertex in $(T_{\psi}')^{\pi}$ has connected components labeled by $d_j,d'j,\delta_j$ that are valid, then the limit is one using Equation \eqref{lamplighter} and Examples \ref{diagonal} and \ref{uniform}.
    \end{proof}

    \subsection{Convergence in probability}

    \begin{proof}[Proof of theorem \ref{measure convergence}]
        It is shown in \cite{male_traffic_2020} that a family of random uniform permutation matrices and a family of diagonal matrices that have a limit in traffic distribution both satisfy the following factorization property. For any finite family of *-graph polynomials $(g_{i})_{i\in I}$, for $\mathbf{A}_{N}=\mathbf{V}_{N},\mathbf{D}_{N},\mathbf{D'}_{N}$ or $\mathbf{\Delta}_N$, we have
        \begin{equation}\label{factorization property}
            \lim_{N\rightarrow\infty}\E\left[\prod_{i\in I}\frac{1}{N}\tr(g_{i}(A_{N}))\right]=\prod_{i\in I}\lim_{N\rightarrow\infty}\E\left[\frac{1}{N}\tr(g_{i}(A_{N}))\right].
        \end{equation}
        Moreover, since independent permutation matrices and diagonal matrices are asymptotically traffic free, the family $\{\mathbf{V}_{N},\mathbf{D}_{N},\mathbf{D'}_{N},\mathbf{\Delta}_N\}$ also satisfies this factorization property (see \cite[Theorem 1.8]{male_traffic_2020}). Hence, for any $k\in\N$, taking a family of two graph polynomials $g_{1}=g_{2}$ which consist in a cycle of length $k$ where all the edges are labeled $x$, one has
        \begin{equation}
            \lim_{N\rightarrow\infty}\E\left[\left(\frac{1}{N}\tr((V_{N}+V_{N}^{*}
            +D_{N})^{k})\right)^{2}\right]=\left(\lim_{N\rightarrow\infty}\E\left[\frac{1}{N}\tr((V_{N}+V_{N}^{*}
            +D_{N})^{k})\right]\right)^{2}.
        \end{equation}
        Using Bienaymé-Tchebychev's inequality, we thus have proven the convergence in probability of all the moments of $\mu_{X_{N}}$ towards the moments of $\mu$. Since both measure have a bounded support, this result is also true when integrating any bounded continuous function $f$. We have thus proven Theorem \ref{measure convergence}.
    \end{proof}

    \section{Central limit theorem for the moments of the lamplighter matrix}

    In this section, we only consider the case where $\Gamma = \Z$, so $d=1$. As in the previous section, in order to avoid carrying an unnecessary factor, we consider the matrix $X'_N:=(2+2m+2t+r_2)X_{N}$. Note that this only amounts to rescaling the covariance. The goal of this section is to provide a second order asymptotic for the moments of the matrix $X'_{N}$, namely Theorem \ref{TCL}. We denote $\mathbf{A}_{N}$ the family $\{V_{N},V_N^*,\mathbf{D}_{N},\mathbf{D'}_{N},\mathbf{\Delta}_N\}$. Let us recall the definition of the quantity at stake,
    \begin{equation}\label{first family}
	   Z_{N}(n):=\frac{1}{\sqrt{N}}\left[\mathrm{Tr}((X'_N)^{n})-\E(\mathrm{Tr}((X'_N)^{n}))\right].
    \end{equation}
    In order to show the convergence of the process $(Z_N(n))_{n\in\N}$ toward a Gaussian process, we rather show the convergence of the  following process $(Z_N(T))_{T}$ which implies the convergence of $(Z_N(n))_n$.
    
    Let $n\in\N$, $k_{1},\cdots,k_{n}\in \N$ and $\psi_{1},\cdots,\psi_{n}$ such that for all $1\leq i \leq n$, $\psi_{i}:[k_{i}]\rightarrow S$, where we recall that $S$ is the set of generators and their inverse of the group $G$ and is in one-to-one correspondence with the matrices appearing in the sum defining $X_N$. Let $T_{\psi_{i}}$ be a cycle of length $k_{i}$, whose edges are labeled according to $\psi_{i}$ (the first one is labeled $\psi(1)$, then following the direction of the edges, the next one is labeled $\psi(2)$ and so on). For each $i$ let $\pi_{i}$ be a partition of the vertices of $T_{\psi_{i}}$. Lastly, we denote $T_{i}:=T_{\psi_{i}}^{\pi_{i}}$ for simplicity. We define
	
	\begin{equation}
		(Z_{N}(T))_T:=\left(\frac{1}{\sqrt{N}}\left[\mathrm{Tr}^{0}(T_{i}(\mathbf{A}_{N})))-\E(\mathrm{Tr}^{0}(T_{i}(\mathbf{A}_{N})))\right]\right)_T,
	\end{equation}
	where the family is among all $T$ obtained from $n\in\N$, $k\in\N$, $\psi$ such that for all $\psi:[k]\rightarrow S$ and all $\pi\in\mathcal{P}(T_{\psi})$. Since each $Z_N(n)$ is a linear combination of $Z_N(T)$'s the convergence of the latter process toward a Gaussian process is stronger.
    
\subsection{Technical results}
     Prior to doing so, we state two technical lemmas giving the asymptotic for the injective trace or the product of injective trace of several test-graphs. We use the terminology \emph{underlying graph} of a test-graph $T$ as the graph obtained from $T$ after forgetting its labels, the direction and the multiplicity of its edges.
     
    \begin{lemma}\label{asymptotics}
        Let $C$ be a test-graph whose underlying graph is a cycle of length $c$ and $L$ be a test-graph whose underlying graph is a line of length $l$. Let $V_N$ be a uniform random permutation matrix. By a direct computation, one has
	       \begin{align*}
		      \E\left[\mathrm{Tr}^{0}(C(V_{N}))\right]&=N\P(\mathcal{U}_{N}\in\mathrm{Cyc}_{c})=1,\\
            \E\left[\mathrm{Tr}^{0}(L(V_{N}))\right]&=N\P(\mathcal{U}_{N}\in\mathrm{Cyc}_{\geq l})=N(1-\frac{l-1}{N}),
	       \end{align*}
        where $\mathcal{U}_{N}$ is a uniform random variable on $\{1,\cdots,N\}$ and the notations $\mathcal{U}_{N}\in\mathrm{Cyc}_{c}$ and $\mathcal{U}_{N}\in\mathrm{Cyc}_{\geq l}$ correspond to "$U_{N}$ belongs to a cycle of length $c$ of $V_{N}$" and "$U_{N}$ belongs to a cycle of length at least $l$ of $V_{N}$" respectively. 

        Moreover, if a test-graph $T$ has an underlying graph which is not a cycle nor a directed line, then, $\E\left[\mathrm{Tr}^{0}(T(V_{N}))\right]=0$.
    \end{lemma}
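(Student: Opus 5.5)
The plan is to compute $\mathrm{Tr}^{0}(T(V_N))$ pathwise, for a fixed realization of the uniform permutation $\sigma$ with $V_N(i,j)=\1_{\sigma(j)=i}$, and only afterwards take expectations.

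\emph{Reduction to a directed graph.} For an edge $e$ labeled $v$, the factor $V_N(\phi(\mathrm{tar}(e)),\phi(\mathrm{src}(e)))$ is the indicator of $\sigma(\phi(\mathrm{src}(e)))=\phi(\mathrm{tar}(e))$. For an edge labeled $v^*$ it is the same indicator with source and target exchanged. So I will replace every $v^*$-edge by a $v$-edge in the opposite direction. Since the factors are $0/1$ indicators, repeated parallel edges with the same orientation impose the same constraint and can be merged. After this, $\mathrm{Tr}^{0}(T(V_N))$ equals the number of injective maps $\phi:V\to[N]$ such that $\sigma(\phi(u))=\phi(w)$ for every directed edge $u\to w$ of the reduced graph $\vec T$.

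\emph{Vanishing part.} Because $\phi$ is injective and $\sigma$ is a function, a vertex with two distinct out-neighbours in $\vec T$ forces $\sigma(\phi(u))$ to take two different values, so the count is $0$. Because $\sigma$ is injective, a vertex with two distinct in-neighbours also gives $0$. Hence a nonzero contribution requires every vertex of $\vec T$ to have in- and out-degree at most one. A connected graph with this property has maximal degree $2$, so its underlying graph is a path or a cycle. This proves the last claim. It also shows that, among graphs whose underlying graph is a cycle or a line, only consistently oriented ones survive: a line is a directed line up to relabeling the vertices $1,\dots,l$, and a cycle is a directed cycle $1\to2\to\dots\to c\to1$. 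Loops ($c=1$) and $2$-cycles ($c=2$) are included, provided opposite edges combine into a consistently oriented cycle. The displayed identities are understood for these consistently oriented graphs.

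\emph{Counting.} For a directed line $1\to\dots\to l$, $\phi$ is determined by $x=\phi(1)$ through $\phi(k)=\sigma^{k-1}(x)$. Injectivity holds exactly when $x,\sigma(x),\dots,\sigma^{l-1}(x)$ are distinct, that is, when the cycle of $\sigma$ containing $x$ has length at least $l$. Thus $\mathrm{Tr}^0=\#\{x:\ x\in \mathrm{Cyc}_{\geq l}\}=N\,\P(\mathcal U_N\in\mathrm{Cyc}_{\geq l}\mid\sigma)$. For a directed cycle the additional constraint $\sigma^{c}(x)=x$, together with distinctness, forces the cycle of $x$ to have length exactly $c$. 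Taking expectations gives $N\,\P(\mathcal U_N\in\mathrm{Cyc}_c)$ and $N\,\P(\mathcal U_N\in\mathrm{Cyc}_{\geq l})$. The final step is the classical fact that, for a uniform permutation, the length of the cycle containing a fixed point is uniform on $[N]$. I would prove this by counting: there are $\binom{N-1}{k-1}(k-1)!$ ways to choose and order the cycle through $1$, and $(N-k)!$ ways to complete the permutation, for a total of $(N-1)!$ for every $k$. This yields $N\cdot\frac1N=1$ and $N\cdot\frac{N-l+1}{N}=N(1-\frac{l-1}{N})$.

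The only delicate point is the bookkeeping in the reduction step: handling multiplicities and mixed $v/v^*$ labels correctly, and the degenerate lengths $c=1,2$. The counting itself is routine.
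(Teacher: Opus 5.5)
Your proof is correct and follows the same direct computation the paper has in mind, which it spells out in the proof of the next lemma: once the image of one vertex is fixed, injectivity and the one-nonzero-entry-per-row-and-column structure of $V_N$ determine $\phi$, so $\mathrm{Tr}^0$ counts the points lying in cycles of length exactly $c$ or at least $l$, and the uniform law of the length of the cycle through a given point gives $1$ and $N-l+1$. Your extra bookkeeping is exactly what the displayed identities need in order to hold literally: reversing $v^*$-edges, merging parallel edges, the consistent-orientation caveat (including directed $2$-cycles, whose underlying simple graph is a single edge), and reading ``line of length $l$'' as having $l$ vertices.
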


    \begin{lemma}
        Let $C_{1},\cdots,C_{K}$ be test-graphs whose underlying graphs are cycles of length $c_{1},\cdots,c_{K}$ respectively. Let $L_{1},\cdots,L_{M}$ be test-graphs whose underlying graphs are lines of length $l_{1},\cdots,l_{M}$ respectively. We have the following equality,
        \begin{multline}\label{mix line and cycles}
            \E\left[\prod_{k=1}^{K}\mathrm{Tr}^{0}(C_{k}(V_{N}))\prod_{m=1}^{M}\mathrm{Tr}^{0}(L_{m}(V_{N}))\right]=\\N^{K+M}\E\left[\prod_{k=1}^{K}\frac{c_{k}\eta_{c_{k}}}{N}\prod_{m=1}^{M}\left(1-\frac{\sum_{j=1}^{l_{m}-1}j\eta_{j}}{N}\right)\right]
        \end{multline}
        where $L:=\mathrm{max}\{c_{1},\cdots,c_{K},l_{1},\cdots,l_{M}\}$ and for all $1\leq j\leq L$, let $\eta_{j}$ is the number of cycles of length $j$ of $V_{N}$
    \end{lemma}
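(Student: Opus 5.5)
The plan is to prove a deterministic identity for each fixed permutation and then take expectations. The key observation is that the left-hand side is a product of \emph{separate} injective traces: the injectivity constraint applies within each test-graph, but not jointly across the $C_k$'s and $L_m$'s. So it suffices to show that for every permutation matrix $V_N=\sigma$ one has
\begin{equation*}
\mathrm{Tr}^{0}(C(\sigma))=c\,\eta_{c}(\sigma),\qquad \mathrm{Tr}^{0}(L(\sigma))=N-\sum_{j=1}^{l-1}j\,\eta_j(\sigma),
\end{equation*}
for $C$ a cycle of length $c$ and $L$ a line on $l$ vertices. Multiplying these over $k$ and $m$, factoring out $N^{K+M}$, and taking $\E$ then gives \eqref{mix line and cycles}. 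The restriction $j\le L$ plays no role beyond fixing which $\eta_j$ appear.

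\textbf{Step 1: reduce to the underlying directed path.} As in Lemma \ref{asymptotics}, I will assume the labels are compatible with a consistent orientation, i.e.\ an edge $v^*$ from $a$ to $b$ is read as a $v$-edge from $b$ to $a$. Otherwise the evaluation vanishes, which is the implicit convention there. Since the entries of $\sigma$ lie in $\{0,1\}$, edge multiplicities do not matter, and $\sigma^*=\sigma^{-1}$. Hence for an injective $\phi$, the product over the edges equals $1$ exactly when $\sigma(\phi(w))=\phi(w')$ for every oriented edge $w\to w'$, and equals $0$ otherwise.

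\textbf{Step 2: count the admissible labellings.}
\begin{itemize}
\item For the line with vertices $w_1\to\cdots\to w_l$, an admissible injective $\phi$ is determined by $x=\phi(w_1)$, with $\phi(w_i)=\sigma^{i-1}(x)$. Injectivity holds if and only if $x,\sigma x,\dots,\sigma^{l-1}x$ are distinct, i.e.\ the cycle of $\sigma$ containing $x$ has length $\ge l$. The number of such $x$ is $N-\sum_{j<l}j\eta_j$.
\item For the cycle of length $c$, the same parametrisation applies, with the additional closing condition $\sigma^{c}x=x$. Together with distinctness of $x,\dots,\sigma^{c-1}x$, this says precisely that $x$ lies in a cycle of length exactly $c$. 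The number of such $x$ is $c\,\eta_c$.
\end{itemize}
This recovers Lemma \ref{asymptotics} after taking expectations; note $\E[c\,\eta_c]=1$.

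\textbf{Step 3: conclude.} Substitute these identities into the product and take the expectation over the uniform $\sigma$. No step is analytically hard. The only points needing care are the orientation and $v^*$ conventions in Step 1, and the small-cycle cases: when $c=1$ or $c=2$, the cycle degenerates to a loop or to a double edge, and I should check that these are still counted by $c\,\eta_c$ under the paper's definition of cycle length.
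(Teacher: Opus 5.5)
Your proposal is correct and follows essentially the paper's argument. The paper likewise fixes the image of one vertex per test-graph, notes that the rest of the injective labelling is then forced, and identifies the count with the number of points of $[N]$ lying in a cycle of length exactly $c_k$ (resp. at least $l_m$). It phrases this through i.i.d. uniform points $\mathcal{U}_k,\mathcal{U}'_m$ and then conditions on the $\eta_j$'s, whereas your pathwise identities $\mathrm{Tr}^0(C(\sigma))=c\,\eta_c(\sigma)$ and $\mathrm{Tr}^0(L(\sigma))=N-\sum_{j<l}j\,\eta_j(\sigma)$ make that final conditioning step immediate.

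The small cases $c=1,2$ you flagged need no separate treatment. Your parametrisation $\phi(w_i)=\sigma^{i-1}(x)$ with closing condition $\sigma^c x=x$ applies verbatim to the loop and to the directed $2$-cycle, giving $\eta_1$ and $2\eta_2$ respectively.
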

    
    \begin{proof}
    Let $V_{k},E_{k}$ be the set of vertices and edges respectively of $C_{k}$ and $V'_{m},E'_{m}$ be the set of vertices and edges respectively of $L_{m}$ for all $1\leq k\leq K$, $1\leq m \leq M$. Developing each injective trace, we obtain,
        \begin{multline}
            \E\left[\prod_{k=1}^{K}\mathrm{Tr}^{0}(C_{k}(V_{N}))\prod_{m=1}^{M}\mathrm{Tr}^{0}(L_{m}(V_{N}))\right]\\
            =\sum_{\substack{\phi_{1},\cdots\phi_{K}\\\phi_{k}:V_{k}\rightarrow[N]\\ \text{ injective}}}\sum_{\substack{\psi_{1},\cdots\psi_{M}\\\psi_{m}:V'_{m}\rightarrow[N]\\ \text{ injective}}}\E\left(\prod_{k=1}^{K}\prod_{e\in E_{k}}V_{N}^{\epsilon(e)}(\phi_{k}(e))\prod_{m=1}^{M}\prod_{e\in E'_{m}}V_{N}^{\epsilon(e)}(\psi_{m}(e))\right).
        \end{multline}
        Let us now arbitrarily choose a vertex $v_{k}$ in each $V_{k}$ (resp. $v'_{m}$ in each $V'_{m}$). Note that for all $1\leq k\leq K$ (resp. all $1\leq m \leq M$), once one chooses $\phi_{k}(v_{k})$ (resp. $\psi_{m}(v'_{m})$), since the matrix has only a single non zero entry per row and per column, and since the $\phi_{k}$ (resp. $\psi_{m}$) are injective, the rest of the values of $\phi_{k}$ (resp. $\psi_{m}$) are entirely determined. Hence, artificially adding a $N\frac{1}{N}$ factor for each of the $\phi_{k}$ and the $\psi_{m}$ and letting $\mathcal{U}_{1},\cdots,\mathcal{U}_{K}$, $\mathcal{U'}_{1},\cdots,\mathcal{U'}_{M}$ be i.i.d. random uniform variables on $\{1,\cdots,N\}$, we obtain 
        \begin{multline}
            \E\left[\prod_{k=1}^{K}\mathrm{Tr}^{0}(C_{k}(V_{N}))\prod_{m=1}^{M}\mathrm{Tr}^{0}(L_{m}(V_{N}))\right]\\
            =N^{K+M}\E\left(\prod_{k=1}^{K}\1_{\mathcal{U}_{k}\in\mathrm{Cyc}_{c_{k}}}\prod_{m=1}^{M}\1_{\mathcal{U'}_{m}\in\mathrm{Cyc}_{\geq l_{m}}}\right).
        \end{multline}
        One then has to condition on the $\eta_{j}$'s for $1\leq j \leq L$ to obtain the above result.
    \end{proof}

    \subsection{Proof of Theorem \ref{TCL}}
    
    By Wick's formula, a centered Gaussian process is characterized by its covariance. Therefore, we only study the asymptotics of 
    \begin{equation}\label{equation at stake}
        \E(Z_N(T_1)\cdots Z_N(T_n)),
    \end{equation}
    and show that it can be expressed in terms of the covariances as in Wick's formula.
	In the following, we assume that each edge labeled $d_j,d'_j$ or $\delta_j$ is a loop, otherwise the quantity in Equation \eqref{equation at stake} is trivially zero. Furthermore, the graph obtained from $T_i$ after deleting all the loops labeled $d_j,d'_j$ or $\delta_j$ forgetting the multiplicity of its edges is a test graph in the variables $v,v^*$ and is denoted $\bar T_i$. Hence, it must either be a line or a cycle since $V_N$ has a single non zero entry per row and per column. We only consider this cases in the following and denote for short "$\bar T_i$ is a line" or "$\bar T_i$ is a cycle" respectively. In both cases, we denote $|\bar T_i|$ the size of the underlying line (or cycle) i.e. its number of edges.
    
    We also say "$T_{i}$ is valid" whenever each of its colored component in one of the families of diagonal matrices is valid in the sense of Definition \ref{def: valid}. We denote $P:=\{i\in [n], \; T_{i} \text{ is valid}\}$ and $I:=[n]\setminus P$. Note that for all $i\in I$, $\E(\mathrm{Tr}^{0}(T_{i}(\mathbf{A}_{N})))=0$ by Example \ref{diagonal}. In the following, we omit the dependency in $\mathbf{A}_{N}$ in all notations since there is no risk of confusion. Developing the product, we get
	
	\begin{align}
		\E\left[Z_{N}(T_{1})\cdots Z_{N}(T_{n})\right]&=\E\left[\prod_{i\in P}Z_{N}(T_{i})\prod_{i\in I}\frac{1}{\sqrt{N}}\mathrm{Tr}^{0}(T_{i})\right]\\\label{developped}
		&=N^{-n/2}\sum_{B\subset P}(-1)^{|P|-|B|}\E\left[\prod_{i\in I\sqcup B}\mathrm{Tr}^{0}(T_{i})\right]\prod_{i\in P\setminus B}\E\left[\mathrm{Tr}^{0}(T_{i})\right].
	\end{align} 
	For the second term, using Lemma \ref{asymptotics} we have 
    \begin{align}\label{second part}
        \prod_{i\in P\setminus B}\E\left[\mathrm{Tr}^{0}(T_{i})\right]&=N^{|P|-|B|}\prod_{i\in P\setminus B}\left(\1_{T_i\text{  is a cycle}}\frac{1}{N}+\1_{ T_i\text{ is a line}}\left(1-\frac{|T_i|-1}{N}\right)\right)\\
        &=:N^{|P|-|B|}\prod_{i\in P\setminus B}h(T_i).
    \end{align}
    
    Let us now focus on the first term of Equation \eqref{developped}. We recall for all $1\leq i \leq n$, the map $\gamma_i:E_i\rightarrow S$ labels each edge by a certain generator and we recall that $S=\hat{S_\Gamma}\sqcup\hat{S_{\Lambda}}$, where $S_\Gamma$ is the set of generators of $\Gamma$ and their inverse and similarly for $S_\Lambda$ whereas the $\hat{}$ denotes the natural inclusion of each groups into $G=\Lambda\wr\Gamma$. Developing each injective trace, we obtain,

    \begin{align}
        \prod_{i\in I}\mathrm{Tr}^{0}(T_{i})&=\sum_{\substack{(\phi_{i})_{i\in I}\\ \phi_{i}:V_{i}\rightarrow [N] \\ \text{injective}}}\prod_{i\in I}\prod_{e\in E_{i}}A_{\gamma_{i}(e)}(\phi_{i}(e))\\
        &=\sum_{\sigma \in P(\sqcup_{i\in I}V_{i})}\sum_{(\phi_{i})_{i\in I}\in S_{\sigma}}\prod_{i\in I}\prod_{e\in E_{i}}A_{\gamma_{i}(e)}(\phi_{i}(e))\\\label{equation developped}
        &=\sum_{\pi\in P(I)}\sum_{\substack{\sigma \in P(\sqcup_{i\in I}V_{i}),\\ \overline{\sigma}=\pi}}\sum_{(\phi_{i})_{i\in I}\in S_{\sigma}}\prod_{i\in I}\prod_{e\in E_{i}}A_{\gamma_{i}(e)}(\phi_{i}(e)),
    \end{align}
    where notations are as below:
    \begin{itemize}
        \item $P(\sqcup_{i\in I}V_{i})$ is the set of partitions $\sigma$ of the disjoint union of the $V_{i}$'s for $i\in I$ such that each block of $\sigma$ contains at most one element of each $V_{i}$.
        \item For a given $\sigma \in P(\sqcup_{i\in I}V_{i})$, $\overline{\sigma}$ is the element of $P(I)$ defined by $i\sim_{\overline{\sigma}}j\Leftrightarrow\exists v\in V_{i},v'\in V_{j}$ such that $v\sim_{\sigma}v'$. In that way, $\overline{\sigma}$ tells us which of the $V_{i}$'s are connected through $\sigma$.
        \item $S_{\sigma}$ is the set of injective maps $(\phi_{i}:V_{i}\rightarrow[N])_{i\in I}$ such that for any $v\in V_{i},v'\in V_{j}$, we have $\phi_{i}(v)=\phi_{j}(v')\Leftrightarrow v\sim_{\sigma}v'$. Equivalently, $S_{\sigma}$ is the set of injective functions $\Phi:V_{I}^{\sigma}\rightarrow[N]$ (where $V_{I}^{\sigma}$ is defined below).
    \end{itemize}

    For the following, for a given $\sigma \in P(\sqcup_{i\in I}V_{i})$, we denote $T_{I}^{\sigma}=(V_{I}^{\sigma},E_{I}^{\sigma}):=\left(\bigsqcup_{i\in I}T_{i}\right)^{\sigma}$ and $\mathcal{CC}(T_{I}^{\sigma})$, the set of all its connected components. 

    Since for every $i\in I$, there exists a vertex with a colored component attached to it that is not valid, this vertex must be associated via $\sigma$ to another vertex of a $T_{j}$ for a certain $j\neq i$ otherwise the expectation of the injective trace is zero. Thus, only the partitions $\pi$ that have all blocks of size at least two may count. We denote $P_{\geq2}(I)$ the set of all such partitions. Note that for $\pi\in P_{\geq 2}(I)$, for all $\sigma\in P(\sqcup_{i\in I}V_{i})$ such that $\overline{\sigma}=\pi$, we have $\#\mathcal{CC}(T_{I}^{\sigma})\leq \frac{|I|}{2}$. 
    Using the independence of the entries of all families of matrices, we write
    \begin{multline}
        \E\left[\prod_{i\in I\sqcup B}\mathrm{Tr}^{0}(T_{i})\right]=\sum_{\pi\in P_{\geq 2}(I)}\sum_{\substack{\sigma \in P(\sqcup_{i\in I}V_{i}),\\ \overline{\sigma}=\pi}}\sum_{(\phi_{i})_{i\in I}\in S_{\sigma}}\sum_{\substack{(\phi_{i})_{i\in B}\\\phi_{i}\text{ injective}}}\\
        \E\left[\prod_{i\in I\sqcup B}\prod_{\substack{e\in E_{i}\\\gamma_i(e)\in \hat{S_\Gamma}}}V^{\varepsilon_i(e)}(\phi_{i}(e))\right]\underbrace{\E\left[\prod_{i\in B\sqcup I}\prod_{\substack{e\in E_{i}\\\gamma_i(e)\in\hat{S_\Lambda}}}A_{\gamma_i}(e)(\phi_{i}(e))\right]}_{=:f(B,I,(\phi_{i})_{i\in I\sqcup B})}.
    \end{multline}
    Note that since $B\subset P$, for all $i\in B$,the quantity $f(B,I,(\phi_{i})_{i\in I\sqcup B})$ does not depend on $B$. Furthermore since the entries of each diagonal matrices are i.i.d. $f(B,I,(\phi_{i})_{i\in I\sqcup B})$ only depends on $\sigma$ (where $(\phi_{i})_{i\in I}\in S_{\sigma}$). We denote $f(I,\sigma):=f(B,I,(\phi_{i})_{i\in I\sqcup B})$ for all $(\phi_{i})_{i\in I}\in S_{\sigma}$, all $B\subset P$ and all $(\phi_{i})_{i\in B}$ injective. Therefore we can write the above term as
    \begin{multline}\label{séparer v et d}
        \E\left[\prod_{i\in I\sqcup B}\mathrm{Tr}^{0}(T_{i})\right]=\sum_{\pi\in P_{\geq 2}(I)}\sum_{\substack{\sigma \in P(\sqcup_{i\in I}V_{i}),\\ \overline{\sigma}=\pi}}f(I,\sigma)\times\\
        \E\left[\prod_{j\in B}\mathrm{Tr}^{0}(\bar T_{j})\sum_{(\phi_{i})_{i\in I}\in S_{\sigma}}\prod_{i\in I}\prod_{\substack{e\in E_{i}\\\gamma_i(e)\in \hat{S_\Gamma}}}A_{\gamma_i(e)}(\phi_{i}(e))\right],
    \end{multline}
     where for a test-graph $T$, we recall that the test-graph $\bar T$ is the test graph obtained after forgetting the edges labeled $d_j,d'_j,\delta_j$, and forgetting the multiplicity of the remaining edges.

    The following proposition simplifies the calculation of the above expectation.

    \begin{proposition}
    Let $I,P$ be defined as above and $B\subset P$. We denote $\mathfrak{C}\mathfrak{C}(T)$ the set of connected components of $T$, not to be confused with $\CC\CC(T)$ the set of colored component with respect to some families. Let $\sigma\in P(\sqcup_{i\in I}V_{i})$, then 
        \begin{multline}\label{equation proposition}
            \E\left[\prod_{j\in B}\mathrm{Tr}^{0}(\bar T_{j})\sum_{(\phi_{i})_{i\in I}\in S_{\sigma}}\prod_{i\in I}\prod_{\substack{e\in E_{i}\\\gamma_i(e)\in\hat{S_\Gamma}}}A_{\gamma_i(e)}(\phi_{i}(e))\right]\\
            =\E\left[\prod_{j\in B}\mathrm{Tr}^{0}(\bar T_{j})\prod_{T \in \mathfrak{CC}(T_{I}^{\sigma})}\mathrm{Tr^{0}}(\bar T)\right]+O\left(N^{|B|+|\mathfrak{CC}(T_{I}^{\sigma})|-1}\right).
        \end{multline}
    \end{proposition}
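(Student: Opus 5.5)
The plan is to compare, for a fixed $\sigma$, the constrained sum over $S_\sigma$ with the injective trace of the glued graph. By definition, $S_\sigma$ is exactly the set of injective maps $\Phi:V_I^\sigma\rightarrow[N]$. Moreover, the product over $i\in I$ and $e\in E_i$ with $\gamma_i(e)\in\hat{S_\Gamma}$ is the product of the entries of $V,V^*$ along the edges of $T_I^\sigma$ labeled by $v,v^*$. Deleting the diagonal loops, which are already absorbed in $f(I,\sigma)$, gives the following. The left-hand side equals
\[
\E\Big[\prod_{j\in B}\mathrm{Tr}^{0}(\bar T_j)\,\mathrm{Tr}^{0}\big(\overline{T_I^\sigma}\big)\Big],
\]
where the injective trace is taken over the whole vertex set of $T_I^\sigma$, jointly injective across its connected components. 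Multiplicities of edges can be forgotten because the entries of $V$ lie in $\{0,1\}$. The right-hand side instead has $\prod_{T\in\mathfrak{CC}(T_I^\sigma)}\mathrm{Tr}^0(\bar T)$, where injectivity holds only inside each component. The proof therefore reduces to controlling the difference between jointly injective and componentwise injective labelings.

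Write $\mathfrak{CC}(T_I^\sigma)=\{T^{(1)},\dots,T^{(r)}\}$. Expand $\prod_{a}\mathrm{Tr}^0(\bar T^{(a)})$ by summing over partitions $\rho$ of $\bigsqcup_a V(T^{(a)})$ whose blocks contain at most one vertex of each component. This is the same decomposition as in \eqref{equation developped}. The term with $\rho$ trivial is exactly $\mathrm{Tr}^0(\overline{T_I^\sigma})$. Every other term is of the form $\mathrm{Tr}^0$ of a graph in which at least two components have been glued, so it has at most $r-1$ connected components.

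The remaining step, which I expect to be the main obstacle, is to bound each such error term, multiplied by $\prod_{j\in B}\mathrm{Tr}^0(\bar T_j)$, by $O(N^{|B|+r-1})$. I would use the argument from the proof of \eqref{mix line and cycles}. Each connected component of a graph in the variables $v,v^*$ contributes a nonzero expectation only if its underlying graph is a line or a cycle, by Lemma \ref{asymptotics}. In that case, once the image of a single chosen vertex is fixed, the labeling of the whole component is determined, because $V_N$ has exactly one nonzero entry per row and per column. Consequently
\[
\Big|\E\Big[\prod_{j\in B}\mathrm{Tr}^{0}(\bar T_j)\,\mathrm{Tr}^{0}(T')\Big]\Big|\le N^{|B|+\#\mathfrak{CC}(T')},
\]
since the quantity inside the expectation is a sum of products of indicator functions, with at most $N$ free choices per component. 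Graphs that are not lines or cycles contribute zero, and those that are satisfy the bound. One point needs care: the sum is over jointly injective maps across the $B$-graphs and the components of $T'$. Gluing between them is allowed, however, since each $\mathrm{Tr}^0(\bar T_j)$ is only injective within itself, so the bound holds with no further expansion.

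Since every nontrivial $\rho$ gives $\#\mathfrak{CC}\le r-1$, and the number of partitions $\rho$ is bounded independently of $N$, the total error is $O(N^{|B|+r-1})$. This is the statement of \eqref{equation proposition}.
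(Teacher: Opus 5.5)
Your proposal is correct and follows essentially the paper's argument. Both expand the product of componentwise injective traces over $\mathfrak{CC}(T_I^\sigma)$ into the jointly injective sum over $S_\sigma$ plus terms indexed by coarser partitions (the paper's $\sigma'>\sigma$ with $\overline{\sigma'}>\overline{\sigma}$, your $\rho$), each having at most $|\mathfrak{CC}(T_I^\sigma)|-1$ components and hence bounded by $O(N^{|B|+|\mathfrak{CC}(T_I^\sigma)|-1})$. The only minor difference is in how each error term is bounded: you count pathwise, one free vertex per connected component, whereas the paper dominates the sum by the product of componentwise injective traces and invokes the lemma on lines and cycles. Your pathwise count holds for any connected graph in $v,v^*$, so the detour through Lemma \ref{asymptotics} in your write-up is unnecessary.
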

    \begin{proof}
    Let $\sigma\in P(\sqcup_{i\in I}V_{i})$. Throughout the proof, in order to alleviate notations, we assume that for all $i\in B$, $T_{i}=\bar T_i$ and similarly with all $T\in\mathfrak{CC}(T_{I}^{\sigma})$, we assume $\bar T=T$.
    
    Starting from the right-hand side of Equation \eqref{equation proposition}, developing the injective trace, we have
    \begin{multline}\label{injective trace connected components}
        \prod_{T\in \mathfrak{CC}(T_{I}^{\sigma})}\mathrm{Tr^{0}}(T)=\sum_{(\phi_{i})_{i\in I}\in S_{\sigma}}\prod_{i\in I}\prod_{e\in E_{i}}A_{\gamma_i(e)}(\phi(e))\\
        +\sum_{\substack{\sigma'>\sigma\\\text{and }\overline{\sigma'}>\overline{\sigma}}}\sum_{(\phi_{i})_{i\in I}\in S_{\sigma'}}\prod_{i\in I}\prod_{e\in E_{i}}A_{\gamma_i(e)}(\phi(e)).
    \end{multline}
    Note that for any $\sigma'$ such that $\overline{\sigma'}>\overline{\sigma}$, we have $|\mathfrak{CC}(T_{I}^{\sigma'})|\leq|\mathfrak{CC}(T_{I}^{\sigma})|-1$. Hence, for any $\sigma'>\sigma$ verifying $\overline{\sigma'}>\overline{\sigma}$,
    \begin{align*}
        &N^{-|B|-|\mathfrak{CC}(T_{I}^{\sigma})|+1}\Bigg|\sum_{(\phi_{i})_{i\in I}\in S_{\sigma'}}\E\left[\prod_{j\in B}\mathrm{Tr}^{0}(T_{j})\prod_{i\in I}\prod_{e\in E_{i}}A_{\gamma_i(e)}(\phi(e))\right]\Bigg|\\
        &\leq N^{-|B|-|\mathfrak{CC}(T_{I}^{\sigma'})|}\sum_{(\phi_{i})_{i\in I}\in S_{\sigma'}}\E\left[\prod_{j\in B}\mathrm{Tr}^{0}(T_{j})\prod_{i\in I}\prod_{e\in E_{i}}A_{\gamma_i(e)}(\phi(e))\right]\\
        &\leq N^{-|B|-|\mathfrak{CC}(T_{I}^{\sigma'})|}\E\left[\prod_{j\in B}\mathrm{Tr}^{0}(T_{j})\prod_{T\in\mathfrak{CC}(T_{I}^{\sigma'})}\mathrm{Tr}^{0}(T)\right]=O(1).
    \end{align*}
    Since the number of such partitions $\sigma'$ is independent from $N$, this concludes the proof of the proposition.
    \end{proof}

    We now use Equation \eqref{mix line and cycles} denoting $B:=B_{c}\sqcup B_{l}$ where $i\in B_{c}\Leftrightarrow T_{i}$ is a cycle and $i\in B_{l}\Leftrightarrow T_{i}$ is a line and similarly $\mathcal{CC}(T_{I}^{\sigma}):=\mathcal{CC}(T_{I}^{\sigma})_{c}\sqcup \mathcal{CC}(T_{I}^{\sigma})_{l}$. Recall that we denote $|T_{i}|$ the size of $T_{i}$ (either as a line or as a cycle) and $|S|$ the size of $S$ either as a line or as a cycle. Recall also that $\eta_j$ is the number of cycles of length $j$ in the permutation $V_N$. We denote, for $i\in B$, $k(i)=\frac{|T_i|\eta_{|T_{i}|}}{N}$ if $i\in B_{c}$ and $k(i)=1-\frac{\sum_{j=1}^{|T_{i}|-1}j\eta_{j}}{N}$ if $i\in B_{l}$. Similarly, for $S\in\mathcal{CC}(T_{I}^{\sigma})$, we denote $k(S)=\frac{|S|\eta_{|S|}}{N}$ if $S\in \mathcal{CC}(T_{I}^{\sigma})_{c}$ and $k(S)=1-\frac{\sum_{j=1}^{|S|-1}j\eta_{j}}{N}$ if $i\in \mathcal{CC}(T_{I}^{\sigma})_{l}$. Combining Equations \eqref{mix line and cycles}, \eqref{séparer v et d} and \eqref{equation proposition}, we have

    \begin{multline}\label{first part}
         N^{-|B|-|\mathcal{CC}(T_{I}^{\sigma})|}\E\left[\prod_{i\in I\sqcup B}\mathrm{Tr}^{0}(T_{i})\right]\\
         =\sum_{\pi\in P_{\geq 2}(I)}\sum_{\substack{\sigma \in P(\sqcup_{i\in I}V_{i}),\\ \overline{\sigma}=\pi}}f(I,\sigma)\E\left[\prod_{i\in B}k(i)\prod_{S\in \mathcal{CC}(T_{I}^{\sigma})}k(S)\right]+O(N^{-1}).
    \end{multline}

    Recall that $n=|P|+|I|$, going back to Equation \eqref{developped}, using Equations \eqref{second part} and \eqref{first part}, we now have
    \begin{multline}
        \E\left[Z_{N}(T_{1})\cdots Z_{N}(T_{n})\right]=\sum_{\pi\in P_{\geq 2}(I)}\sum_{\substack{\sigma \in P(\sqcup_{i\in I}V_{i}),\\ \overline{\sigma}=\pi}}N^{-n/2+|P|+|\mathcal{CC}(T_{I}^{\sigma})|}\sum_{B\subset P}(-1)^{|P|-|B|}\\
        \prod_{i\in P\setminus B}h(i)\E\Bigg[\prod_{i\in B}k(i)\prod_{S\in \mathcal{CC}(T_{I}^{\sigma})}k(S)\Bigg]f(I,\sigma)+O\left(N^{-1-n/2+|P|+|\mathcal{CC}(T_{I}^{\sigma})|}\right).
    \end{multline}
    Factorizing the sum over $B\subset P$, we get
    \begin{multline}
        \E\left[Z_{N}(T_{1})\cdots Z_{N}(T_{n})\right]=\sum_{\pi\in P_{\geq 2}(I)}\sum_{\substack{\sigma \in P(\sqcup_{i\in I}V_{i}),\\ \overline{\sigma}=\pi}}N^{|P|/2+|\mathcal{CC}(T_{I}^{\sigma})|-|I|/2}\\
        \E\Bigg[\prod_{i\in P}\left(k(i)-h(i)\right)\prod_{S\in \mathcal{CC}(T_{I}^{\sigma})}k(S)\Bigg]f(I,\sigma)(1+O(N^{-1})).
    \end{multline}
    We now introduce a $N$ factor for each $i\in P$ as follows
    \begin{multline}
        =\sum_{\pi\in P_{\geq 2}(I)}\sum_{\substack{\sigma \in P(\sqcup_{i\in I}V_{i}),\\ \overline{\sigma}=\pi}}N^{-|P|/2+|\mathcal{CC}(T_{I}^{\sigma})|-|I|/2}\\
        \E\Bigg[\prod_{i\in P}N\left(k(i)-h(i)\right)\prod_{S\in \mathcal{CC}(T_{I}^{\sigma})}k(S)\Bigg]f(I,\sigma)(1+O(N^{-1})).
    \end{multline}
    Now, note that for all $i\in P$, the quantity $N\left(k(i)-h(i)\right)$ is either $|T_i|\eta_{|T_{i}|}-1$, either $|T_{i}|-1-\sum_{j=1}^{|T_{i}|-1}j\eta_{j}=-\sum_{j=1}^{|T_{i}|-1}(j\eta_{j}-1)$, depending on whether $T_{i}$ is a cycle or a directed line respectively. It is shown in \cite{kammoun_small_2023} that the family $(\eta_{j})_{1\leq j\leq \max{|T_{i}|}}$ converges in distribution as $N$ goes to infinity to a family of independent random variables $(\xi_{j})_{1\leq j\leq \max{|T_{i}|}}$ where $\xi_{j}$ is a random variable distributed like a Poisson of parameter $\frac{1}{j}$. Hence, since each $k(S)$ is smaller or equal to one, the expectation is bounded independently of $N$. Furthermore, $-|P|/2\leq0$ and $|\mathcal{CC}(T_{I}^{\sigma})|-|I|/2\leq 0$, so the terms in the above sum have a non-zero contribution at the limit only if $|P|=0$ and $|\mathcal{CC}(T_{I}^{\sigma})|=|I|/2=n/2$, which is in turn equivalent to $|P|=0$ and $\pi$ is a pairing of $[n]$ (denoted $\pi\in P_{2}(n)$), which means that all its blocks are of size 2. Furthermore, note that $f(I,\sigma)$ is 1 if $T_{I}^{\sigma}$ is valid and 0 otherwise.
    
    Summarizing all of this, denoting $T:=\bigsqcup_{i\in[n]}T_{i}$, we have
    \begin{multline}
        \E\left[Z_{N}(T_{1})\cdots Z_{N}(T_{n})\right]\\
        =\prod_{i\in[n]}\1_{T_{i}\text{ not valid}}\sum_{\pi\in P_{2}(n)}\sum_{\substack{\sigma \in P(\sqcup_{i\in [n]}V_{i}),\\ \overline{\sigma}=\pi}}\1_{T^{\sigma}\text{ valid}}\E \left[\prod_{S\in \mathcal{CC}(T^{\sigma})}k(S)\right](1+O(N^{-1})).
    \end{multline}
    Now, if the restriction to edges labeled $v,v^{*}$ of a connected component $S$ of $T^{\sigma}$ is a cycle, it brings a $N^{-1}$ factor, thus a vanishing contribution. Hence, $\E \left[\prod_{S\in \mathcal{CC}(T^{\sigma})}k(S)\right]=\prod_{S\in \mathcal{CC}(T^{\sigma})}\1_{S|_{v}\text{ directed line}}+o(1)$. The case $n=2$ gives
    \begin{equation}
        \E(Z_{N}(T_{1})Z_{N}(T_{2}))=\underbrace{\sum_{\sigma\in P(V_{1}\sqcup V_{2})}\1_{T_{1}\text{ not valid}}\1_{T_{2}\text{ not valid}}\tau^{0}\left((T_{1}\sqcup T_{2})^{\sigma}\right)}_{=:M^{(2)}(T_{1},T_{2})}+o(1).
    \end{equation}
    
    The general case for $n\geq3$ gives the Wick formula

    \begin{multline}
        \E\left[Z_{N}(T_{1})\cdots Z_{N}(T_{n})\right]=\sum_{\pi\in P_{2}(n)}\prod_{\{i,j\}\in\pi}\1_{T_{i}\text{ and } T_{j}\text{ not valid}}\sum_{\sigma \in P(V_{i}\sqcup V_{j})}\tau^{0}\left((T_{i}\sqcup T_{j})^{\sigma}\right)+o(1)\\
        =\sum_{\pi\in P_{2}(n)}\prod_{\{i,j\}\in\pi}M^{(2)}(T_{i},T_{j})+o(1),
    \end{multline}
    which characterizes the Gaussian distribution. This concludes the proof of Theorem \ref{TCL}.

    \section{Operator-valued free probability space}

    The goal of this section is to provide a natural framework to express the limiting objects introduced before. We start by recalling basics about operator-valued probability spaces, then we recall the notions of freeness with amalgamation over the diagonal before introducing the limiting space.

   In \cite{au_freeness_2021}, Au et al. showed that permutation invariant random matrices are asymptotically free over the diagonal. We use the fact that the families $\{V_{N},V_{N}^{*}\},$ $ \{D'_j\}_{1\leq j \leq r_2},$ $\{D_j,D_j^*\}_{1\leq j\leq t},$ $\{\Delta_j,\Delta_j^*\}_{1\leq j \leq m}$  are asymptotically free over the diagonal in order to compute the limit of the operator-valued free-cumulants of the family $\{V_{N},V_{N}^{*}\}\cup\{D'_j\}_{1\leq j \leq r_2}\cup\{D_j,D_j^*\}_{1\leq j\leq t}\cup\{\Delta_j,\Delta_j^*\}_{1\leq j \leq m}$ in Proposition \ref{prop: asymptotic op val cumulants}.
    
    \subsection{Basic framework of operator valued probability}
    We introduce some notions of operator-valued probability, the reader may look at \cite{mingo_free_2017} for a more detailed introduction to operator-valued non-commutative probability.

    \begin{definition}
        Let $\A$ be a unital algebra and $\B$ a unital subalgebra of $\A$. An \emph{operator-valued probability space} $(\A,\B,\Delta)$ consists of $\B\subset\A$ and a map $\Delta :\A\rightarrow\B$ satisfying
        \begin{align}
            \Delta(b)&=b\quad\quad\forall b\in\B,\\\label{conditionnal expectation}
            \Delta(b_{1}ab_{2})&=b_{1}\Delta(a)b_{2}\quad\quad\forall a\in\A,\quad\forall b_{1},b_{2}\in\B.
        \end{align}
        Such a map $\Delta$ is called a \emph{conditional expectation}.

        Let $K$ be an arbitrary index set. We write $\B\langle X_{k}: k \in K\rangle$ for the free non commutating algebra generated by $\B$ and the non commutating indeterminate $(X_{k})_{k\in K}$. We call an element of $\B\langle X_{k}: k \in K\rangle$ a $\B$-valued polynomial. The \emph{operator-valued distribution} (or $\Delta$-distribution for short) of a family $\mathbf{A} = (A^{(k)} )_{k\in K} \subset \A$ is the linear map of operator-valued moments
        \begin{equation}
            \Delta_{\mathbf{A}}:\B\langle X_{k}: k \in K\rangle\rightarrow\B,\quad P\mapsto\Delta(P(\mathbf{A})).
        \end{equation}

        We say that the families $\mathbf{A}_{1} = (A^{(k)}_{1} )_{k\in K} ,\cdots , \mathbf{A}_{L} = (A^{(k)}_{L} )_{k\in K} \subset \A$ are \emph{free with amalgamation over $\B$} (or free over $\B$ for short) if
        \begin{equation}\label{free over B}
            \Delta\left((P_{1}(\mathbf{A}_{l_{1}})-\Delta(P_{1}(\mathbf{A}_{l_{1}})))\cdots(P_{n}(\mathbf{A}_{l_{n}})-\Delta(P_{n}(\mathbf{A}_{l_{n}})))\right)=0,
        \end{equation}
        for any $\B$-polynomials $P_{1},\cdots,P_{n}\in\B\langle X_{k}: k \in K\rangle$ whenever $l_{j}\neq l_{j+1}$ for $1\leq j\leq n-1$.
    \end{definition}
    The ordinary freeness is contained in this definition when taking $\B=\C$.

    As in free probability, there is a notion of (operator-valued) free cumulants that is very similar to that of free probability. One only has to take care of the order of the variables in all the expressions.

    \begin{definition}
        In an operator-valued free probability space $(\A,\B,\Delta)$, we define the corresponding (operator-valued) free cumulants $(\kappa_{n}^{\B})_{n\in\N},\kappa_{n}^{\B}:\A^{n}\rightarrow\B$ by the moment cumulant formula :
        \begin{equation}\label{moment cumulant}
            \Delta(a_{1}\cdots a_{n})=\sum_{\pi\in NC(n)}\kappa_{\pi}^{\B}(a_{1},\cdots,a_{n}),
        \end{equation}
        where arguments of $\kappa_{\pi}^{\B}$ are distributed according to the blocks of $\pi$, but the cumulants are nested inside each other according to the nesting of the blocks of $\pi$.
    \end{definition}
    \begin{remark}
        Actually, using Equation \eqref{conditionnal expectation} one sees that 
    \begin{equation}
        \kappa_{n}^{\B}(b_{0}a_{1},b_{1}a_{2},\cdots,b_{n-1}a_{n}b_{n})=b_{0}\kappa_{n}^{\B}(a_{1}b_{1},\cdots,a_{n-1}b_{n-1},a_{n})b_{n},
    \end{equation}
    for all $a_{1},\cdots,a_{n}\in\A$ and all $b_{0},\cdots,b_{n+1}\in\B$. This can also be stated by saying that the cumulant $\kappa_{n}^{\B}$ is a map from the $\B$-module tensor product $\A^{\otimes_{\B}n}=\A\otimes_{\B}\A\otimes_{\B}\cdots\otimes_{\B}\A$.
    \end{remark}
    
    As in the free probability setting, we have a Möbius inversion formula for the operator-valued free cumulants, namely
    \begin{equation}\label{Mobius inversion}
            \kappa_{n}^{\B}(a_{1},\cdots,a_{n})=\sum_{\pi\in NC(n)}\mu(\pi,1_{n})\Delta_{\pi}(a_{1},\cdots,a_{n}),
    \end{equation}
     where $\Delta_{\pi}$ is defined from $\Delta$ as $\kappa_{\pi}$ is from $\kappa$ (that is, its arguments are nested according to the block of $\pi$), $\mu$ is the Möbius function on non-crossing partitions (see \cite[Chapter 10]{nica_lectures_2006}) and $1_{n}$ is the partition $\{\{1,2,\cdots,n\}\}$. A proof of this is given in \cite[Chapter 9]{mingo_free_2017} and is more of a combinatorial fact on multi-linear functionals on posets.
     
    As in the standard and in the free case, the freeness over $\B$ can also be stated in a simpler manner using the operator-valued free cumulants. Indeed, two elements $a_{1},a_{2}\in\A$ are free over $\B$ if and only if the unital $\B$-subalgebras of $\A$ generated by $a_{1}$ and $a_{2}$ (say $\A_{1}$ and $\A_{2}$ respectively), are free in the sense of Equation \eqref{free over B}. Moreover, this is also equivalent to : for all $c_{1},\cdots,c_{n}\in\A$ such that $c_{i}\in \A_{l_{i}}$ where $l_{i}$ is 1 or 2, then 
    \begin{equation}
        \kappa_{n}^{\B}(c_{1},\cdots,c_{n})=0,
    \end{equation}
    as soon as there exists $i,j\in[n]$ such that $l_{i}\neq l_{j}$. Two random variables are free over $\B$ if and only if the mixed operator-valued free cumulants vanish.

    \subsection{Cactus and asymptotic freeness over the diagonal}
    
    Let $\M_{N}(\C)$ be the algebra of complex $N\times N$ matrices. We consider in the following the diagonal map $\Delta:\M_{N}(\C)\rightarrow\mathcal{D}_{N}(\C)$ onto the subalgebra $\mathcal{D}_{N}(\C)$ of diagonal matrices defined by
    \begin{equation}
        \Delta((a_{i,j})_{i,j\in[N]})=(\delta_{i,j}a_{i,j})_{i,j\in[N]}.
    \end{equation}
    One can verify that $(\M_{N}(\C),\mathcal{D}_{N}(\C),\Delta)$ is an operator-valued probability space. We also have that $(\M_{N}(L^{\infty-}),\mathcal{D}_{N}(L^{\infty-}),\Delta)$ is an operator-valued probability space, where $\M_{N}(L^{\infty-})$ is the set of $N\times N$ random matrices whose entries have finite moments of all orders.

    One cannot directly define a notion of asymptotic freeness over $\mathcal{D}_{N}(L^{\infty-})$ (or even a convergence in operator-valued distribution) as the dimension of $\mathcal{D}_{N}(L^{\infty-})$ goes to infinity as $N$ grows. We thus consider a sub-operator-valued probability space of this one that allow us to see the elements of the subalgebra $\B_{N}$ as independent from $N$. The price for this change of setup is that we have to construct the algebra from the matrices we are interested in.

    \begin{definition}
        Let $\mathbf{A}_{N}=\{V_{N},V_{N}^{*},\mathbf{D}_{N}\}$ be a family of a uniform random permutation matrix $V_{N}$, its transpose and an independent family of independent diagonal matrices $\mathbf{D}_{N}$. We define $\A_{N}\subset \M_{N}(L^{\infty-})$ the smallest unital subalgebra containing $\mathbf{A}_{N}$ that is closed under the diagonal map $\Delta$. We denote by $\B_{N}$ its image under $\Delta$ : $\B_{N}=\Delta(\A_{N})\subset\A_{N}$.
    \end{definition}
    In the following, we take $\mathbf{D}_N$ to be the family of diagonal matrices corresponding to the group $\Lambda \wr \Gamma$, i.e. those of Equation \eqref{eq: matrix model}.
    \begin{definition}
        We say that a finite connected directed graph $G$ is a \emph{cactus} if every edge belongs to a unique simple cycle. In the case where each such cycle is directed (each edge have the same orientation), then we further specify that $G$ is an \emph{oriented cactus}. We write $\mathcal{C}$ for the set of all \emph{planted cactus-type monomials}, that is, graph monomials with $v_{in}=v_{out}$ whose underlying graph is an oriented cactus. We write $\mathcal{C}_{N}$ for the vector space generated by $(g(\mathbf{A}_{N}))_{g\in\mathcal{C}}$ . More generally, we define $\F$ to be the set of all graph monomials obtained by starting with a directed path
        \begin{equation*}
            \underset{v_{out}}{\cdot}\leftarrow\cdot\leftarrow\cdot  \cdots \cdot\leftarrow \underset{v_{in}}{\cdot}
        \end{equation*}
        and attaching oriented cacti to the vertices of this path
        \begin{equation*}
            \overset{\vee}{\underset{v_{out}}{\cdot}}\leftarrow\overset{\vee}{\cdot}\leftarrow\overset{\vee}{\cdot}  \cdots \overset{\vee}{\cdot}\leftarrow \overset{\vee}{\underset{v_{in}}{\cdot}}.
        \end{equation*}
        Similarly, we write $\F_{N}$ for the vector space generated by $(g(\mathbf{A}_{N}))_{g\in\F}$. We say that an element of $\F$ is a cactus field-type monomial.
    \end{definition}
    Note that $\Delta(\F)=\mathcal{C}$, where for any graph monomial $g$, $\Delta(g)$ denotes the graph monomial obtained by identifying $v_{in}$ and $v_{out}$ of $g$. This notation comes from the fact that for any $g\in\F$, $\Delta(g(\mathbf{A}_{N}))=\Delta(g)(\mathbf{A}_{N})$.
    
    The following proposition is proven in \cite{au_freeness_2021} and justifies the introduction of cacti and cactus fields.
    \begin{proposition}
        With the notations of the two above definitions, we have $\F_{N}=\A_{N}$ and $\mathcal{C}_{N}=\B_{N}$.
    \end{proposition}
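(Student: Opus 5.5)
We prove the two equalities by double inclusion, starting with the graph-monomial side. The key tool is multiplicativity of evaluation. For graph monomials $g_1,g_2$, the product $g_1(\mathbf{A}_N)g_2(\mathbf{A}_N)$ equals $(g_1\cdot g_2)(\mathbf{A}_N)$, where $g_1\cdot g_2$ is obtained by gluing $v_{in}$ of $g_1$ to $v_{out}$ of $g_2$. Likewise $\Delta(g(\mathbf{A}_N))=\Delta(g)(\mathbf{A}_N)$, where $\Delta(g)$ identifies the input and output.

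\textbf{Inclusion $\F_N\subset\A_N$.} We induct on the number of edges of $g\in\F$, proving the stronger statement that every cactus and cactus field lies in $\A_N$.

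A single edge labeled by an element of $\mathbf{A}_N$ is a generator. A planted oriented cactus decomposes at its root into simple directed cycles, each with further cacti hanging off its vertices. One such cycle, read from the root, is the monomial $\Delta\bigl(c_k x_k c_{k-1}\cdots x_1 c_0\bigr)$. Here the $x_j$ are edge labels and the $c_j$ are the cacti attached at the intermediate vertices. By induction each $c_j$ is a diagonal element of $\A_N$, so this monomial lies in $\A_N$, which is closed under $\Delta$.

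Several cycles glued at the root correspond to a product of diagonal matrices. A cactus field is a product $c_\ell x_\ell c_{\ell-1}\cdots x_1c_0$ along the path, with cacti $c_j$ at its vertices. Hence $\F_N\subset\A_N$, and applying $\Delta$ gives $\mathcal{C}_N\subset\B_N$.

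\textbf{Inclusion $\A_N\subset\F_N$.} It suffices to show that $\F_N$ is a unital algebra containing $\mathbf{A}_N$ and closed under $\Delta$; then minimality of $\A_N$ gives the inclusion. The identity is the trivial monomial and the generators are one-edge paths. Closure under products holds because gluing the $v_{in}$ of one cactus field to the $v_{out}$ of another concatenates the paths and merges the cacti at the junction, which again gives a cactus field.

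Closure under $\Delta$ is the subtle point. Identifying the endpoints of the path $v_{out}\leftarrow\cdots\leftarrow v_{in}$ turns it into a single directed cycle, oriented consistently. Every other edge already lay in a unique oriented cycle of an attached cactus, so the result is a planted oriented cactus. Thus $\Delta(\F)=\mathcal{C}$, and $\F_N$ is stable.

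One must also check that $\F_N$ is closed under linear combinations (true by definition) and that the generators $V_N^*$ and $D^*$ are single edges labeled $x^*$, which is allowed. Finally, $\B_N=\Delta(\A_N)=\Delta(\F_N)=\mathcal{C}_N$.

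\textbf{Main obstacle.} The hardest step is the bookkeeping in the first inclusion. One has to verify that every oriented cactus can be built from the generators through nested products and applications of $\Delta$, ordered correctly along each cycle. I would handle this by induction on the number of simple cycles, peeling off a cycle containing the root. The point to check is that the cacti hanging off that cycle are strictly smaller, so the induction hypothesis applies to them.
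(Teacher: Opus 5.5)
Your proof is correct. Note, though, that the paper does not prove this proposition: it attributes it to \cite{au_freeness_2021}, having only remarked in passing that $\Delta(\F)=\mathcal{C}$ and that $\Delta(g(\mathbf{A}_N))=\Delta(g)(\mathbf{A}_N)$. So you are supplying a self-contained argument where the paper relies on a citation.

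Your double inclusion is the natural one. For $\A_N\subset\F_N$ you use minimality of $\A_N$. This only needs two facts: concatenating spines stays in $\F$, and closing a spine stays in $\mathcal{C}$, because closing adds exactly one new block, a directed cycle, to the block tree. For $\F_N\subset\A_N$ you induct on the number of edges. You write a cactus field as $c_\ell x_\ell\cdots x_1c_0$, and each block through the root of a planted cactus as $\Delta(x_k c_{k-1}\cdots c_1x_1)$, whose hanging cacti are strictly smaller.

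What your version buys is that it makes explicit the multiplicativity of evaluation under gluing. The paper uses this again, without proof, in Lemma~\ref{lem:cumulant-convergence}. What the citation buys is brevity and generality, since the result holds for arbitrary families and not only for this model.

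There is one convention you should state rather than leave implicit. In your induction every edge label must correspond to an element of $\mathbf{A}_N$, so $\mathbf{D}_N$ has to contain the adjoints $D_j^*$ of the matrices in \eqref{eq: matrix model}. If it did not, a loop labelled $d_j^*$ would evaluate to $\overline{D_j}$. That matrix cannot be produced from $D_j$, $V_N$, $V_N^*$ by products, scalar linear combinations and $\Delta$. Indeed, the entries of such elements are polynomials in the entries of $D_j$, whereas $\overline{z}=z^{-1}$ on $\mathbb{T}$.
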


    Thanks to this proposition, in this setup, one can define a notion of asymptotic freeness over $\B_{N}$ because now, the coefficients of the $\B_{N}$-valued polynomials are independent from $N$ : the graph structure does not change but we evaluate those graphs on larger and larger matrices. In \cite{au_freeness_2021}, it is shown that under some assumptions, two families of permutation invariant random matrices are asymptotically free over the diagonal.

\subsection{The limiting $C^*$ operator-valued probability space}

Throughout this subsection, we consider $\A:=\C\langle\F\rangle$, $\B:=\C\langle\mathcal{C}\rangle$ and $\Delta:\A\rightarrow\B$ defined as an operation on graphs extended by linearity (where $\mathcal{F,C}$ were defined in section 4)
Let us define
$$ L:=\bigoplus_{k\in\mathbb Z}\Lambda,\quad\text{so that }\quad  G:=L\rtimes\mathbb Z,$$ 
where the semi-direct product is obtained from the shift. We always identify $L$ with $L\times\{0\}\leq G$ so that $L$ is viewed as a subgroup of $G$. Let $\lambda_G$ be the left regular representation on $\ell^2(G)$.  We write
$$ \A_0:=C_r^*(G)=\overline{\lambda_G(\mathbb C[G])}^{\|\cdot\|_{B(\ell^2(G))}},\qquad \tau_G(x):=\langle x\delta_{e_G},\delta_{e_G}\rangle .$$
The functional $\tau_G$ is the canonical faithful tracial state on $C_r^*(G)$; equivalently, it extracts the coefficient of $e_G$ on $\mathbb C[G]$ (see \cite[Chapter 3]{nica_lectures_2006} or \cite[Section~2.5]{brown_ozawa_2008}).

For a finite sum in $\mathbb C[G]$, define
\begin{equation}\label{eq:compact-expectation-fourier}
    E_L\left(\sum_{g\in G}c_g\lambda_G(g)\right):=\sum_{\ell\in L}c_\ell\lambda_G(\ell).
\end{equation}
This map extends to a conditional expectation $E_L:C_r^*(G)\longrightarrow C_r^*(L).$ Indeed, if $V:\ell^2(L)\hookrightarrow\ell^2(G)$ is the natural isometry, then $E_L(x)=V^*xV$ on $C_r^*(G)$. It is easy to see for elements $x\in \lambda_G(\C[G])$ and one obtains the equality in $C^*_r(G)$ by taking the limit in operator norm.  Hence $E_L$ is unital and completely positive, since compression by an isometry has these properties \cite[Chapter~2]{paulsen_cb_2002}.

Recall that a $C^*$-operator-valued probability space consists of a unital $C^*$-algebra $\A_0$, a unital $C^*$-subalgebra $\B_0\subset\A_0$, and a conditional expectation $E:\A_0\to\B_0$; see \cite{mingo_free_2017}.  In the present setting, take $ \B_0:=C_r^*(L)\subset \A_0=C_r^*(G).$ The inclusion is the canonical one induced by the subgroup $L\leq G$; it is isometric for reduced group $C^*$-algebras \cite[Section~2.5]{brown_ozawa_2008}.
Formula \eqref{eq:compact-expectation-fourier} shows that $E_L$ range is $C_r^*(L)$ and
that it fixes this subalgebra pointwise.  Thus it is a norm-one projection
onto $C_r^*(L)$, and \cite{tomiyama_1957} implies the bimodule identity
$$ E_L(b_1xb_2)=b_1E_L(x)b_2,\qquad\forall x\in C_r^*(G),\quad\forall  b_1,b_2\in C_r^*(L).$$
Consequently,
\begin{equation*}
 (\A_0,\B_0,E_L)=\left(C_r^*(G),C_r^*(L),E_L\right)
\end{equation*}
is a $C^*$-operator-valued probability space.

Moreover, the expectation is compatible with the graph diagonal in the following sense.  Recall that $f\in\mathcal F$ is \emph{balanced} when every planted cycle contains equally many labels $v$ and $v^*$. For $f\in\F$, let $w(f)\in G$ be the group element obtained by reading the labels along the spine and along the fixed contours of the planted cacti. Define for $f \in \mathcal F$,
\begin{equation}\label{eq:Theta}
    \Theta(f):=
    \begin{cases}
        w(f),&f\text{ balanced},\\
        0,&f\text{ non-balanced}.
    \end{cases}
\end{equation}
It extends to a surjective unital $*$-homomorphism $\Theta:\A\to\mathbb C[G]$. Indeed, concatenating two cactus fields concatenates their contour words and preserves all planted cycles. Hence the fact that $f$ is balanced is multiplicative and $w(f_1f_2)=w(f_1)w(f_2)$ whenever both factors are balanced; if one factor is not balanced, its unbalanced cycle remains present.  Reversing all edges replaces the contour word by its inverse and preserves being balanced or not.  Thus $\Theta$ is a unital $*$-homomorphism.  

Furthermore, every word in the generators of $G$ is represented by a directed path (hence balanced), proving surjectivity.

Moreover, we have the identities
\begin{equation}\label{eq:compact-Theta-B}
    \Theta(\B)=\mathbb C[L], \qquad\Theta\circ\Delta=E_L\circ\Theta.
\end{equation}
Closing the spine of an admissible cactus field creates one additional cycle. This cycle is balanced exactly when the $\mathbb Z$-coordinate of $w(f)$ is zero, that is, when $w(f)\in L$.  This proves the second identity in \eqref{eq:compact-Theta-B}; the first follows by applying the same observation to closed cactus fields and by representing each element of $L$ by a balanced closed word.

In the following, we often omit $\lambda_G$ and therefore identify formal linear combination of elements of $G$ with their corresponding operator on $\ell^2(G)$. Thus, for $a\in\A$, $\Theta(a)$ may be viewed as an element of $\A_0$.

Let us consider the linear form $\phi$ on $\A$ defined on cactus-field type monomials by 
\begin{equation*}
    \phi(a)= \lim_{N\rightarrow\infty} \E\left(\frac{1}{N}\tr a(V_{N},D_{N})\right),
\end{equation*}
and extended by linearity. The previous section says precisely that a non-balanced cactus field contributes zero and that a balanced one contributes one if and only if $w(f)=e_G$. Consequently,
\begin{equation}\label{eq: state identification}
    \phi(a)=\tau_G(\Theta(a)),\qquad \forall a\in\A.
\end{equation}

    \section{Limiting $R$-transform}

With the results of the previous section, a natural question is to establish a formula for the limiting $R$-transform. We establish the limiting cumulants in Proposition \ref{prop: asymptotic op val cumulants} and derive the $R$-transform in Proposition \ref{analytic R transform}.

\subsection{Asymptotic operator-valued cumulants}

We keep the notations introduced in the last section. If $b\in\mathcal{C}$ is a cactus-type monomial, we denote its evaluation by $b_N=b(V_N,\mathbf D_N)\in\mathcal{A}_N$. We will use the following description of $\Theta$ on cacti explained in the previous subsection: if $b$ is balanced, then there exists a finitely supported function $f:\Z\to\Lambda$ such that $\Theta(b)=(f,0)$, whereas $\Theta(b)=0$ if $b$ is not balanced. With the identification $L=L\times\{0\}$, we have $\Theta(b)=\lambda_L(f)\in\B_0$ if $b$ is balanced.

\begin{proposition}[Asymptotic operator-valued cumulants]\label{prop: asymptotic op val cumulants}
    Let $r\geq1$, let $b_1,\ldots,b_n\in\mathcal C$ be cactus-type monomials, and let $\epsilon:[n]\to\{1,-1\}$. Then
    \begin{equation}\label{eq:target-limit}
        \lim_{N\to\infty}\E\left[\frac{1}{N} \mathrm{Tr}\left(\kappa_n^{\B_N}\bigl(V_N^{\epsilon(1)}b_{1,N},\ldots,V_N^{\epsilon(n)}b_{n,N}\bigr)\right)\right]
    \end{equation}
    is zero unless all the following conditions hold:
    \begin{enumerate}
        \item $n=2m$ for some $m\geq1$ and the sequence $\epsilon(1),\ldots,\epsilon(2m)$ is alternating;
        \item every $b_i$ is balanced;
        \item the word associated with $V_N^{\epsilon(1)}b_{1,N}\cdots V_N^{\epsilon(2m)}b_{2m,N}$ represents the identity of $G$.
    \end{enumerate}
    If these three conditions hold, then 
    \begin{equation}\label{eq:target-value}
        \lim_{N\to\infty}\E\left[\frac{1}{N} \mathrm{Tr}\left(\kappa_r^{\B_N}\bigl(V_N^{\epsilon(1)}b_{1,N},\ldots,V_N^{\epsilon(n)}b_{n,N}\bigr)\right)\right]=(-1)^{m-1}C_{m-1}.
    \end{equation}
\end{proposition}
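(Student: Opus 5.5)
We first pass to the limit, then compute in the limiting space. By the operator-valued Möbius formula \eqref{Mobius inversion}, $\kappa_n^{\B_N}(V_N^{\epsilon(1)}b_{1,N},\ldots,V_N^{\epsilon(n)}b_{n,N})$ is a finite linear combination, with $N$-independent coefficients $\mu(\pi,1_n)$, of nested diagonal moments $\Delta_\pi(\cdots)$. Each nested moment is the evaluation of a planted cactus-type monomial $c_\pi\in\mathcal C$ in $(V_N,\mathbf D_N)$, by $\mathcal C_N=\B_N$.

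Taking $\E\frac1N\mathrm{Tr}$ and letting $N\to\infty$, the identity \eqref{eq: state identification} gives
\[
\lim_{N\to\infty}\E\frac1N\mathrm{Tr}\,\kappa_n^{\B_N}(\cdots)=\sum_{\pi\in NC(n)}\mu(\pi,1_n)\,\tau_G(\Theta(c_\pi)).
\]
Since $\Theta$ is a homomorphism with $\Theta\circ\Delta=E_L\circ\Theta$, we have $\Theta(c_\pi)=(E_L)_\pi(u_1\beta_1,\ldots,u_n\beta_n)$. Here $u_i=\lambda_G(t^{\epsilon(i)})$, with $t$ the generator of the base $\Z$, and $\beta_i=\Theta(b_i)\in\C[L]$. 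Hence the limit equals $\tau_G\bigl(\kappa_n^{\B_0}(u_1\beta_1,\ldots,u_n\beta_n)\bigr)$, the $C_r^*(L)$-valued cumulant in $(C_r^*(G),C_r^*(L),E_L)$. The remaining work is to compute this cumulant.

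\textbf{Condition 2.} If some $b_i$ is unbalanced, then $\beta_i=0$, so by multilinearity the cumulant vanishes.

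\textbf{Condition 1.} Assume every $b_i$ is balanced, so $\beta_i=\lambda_L(f_i)$ with $f_i:\Z\to\Lambda$ finitely supported. Using $\lambda(f_i)$ in the bimodule property, we push all $\beta$'s past the $u$'s. Conjugation by $u_j$ shifts $f$, so each $\beta_i$ either sits outside the cumulant or becomes a coefficient inside it. We thus reduce to computing $\kappa^{\B_0}$ of words in $u,u^*$ interlaced with group elements of $L$.

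The key observation is that $u=\lambda(t)$ is a $\B_0$-valued Haar unitary. Indeed, $E_L(u^{k}b)=0$ for $k\neq0$ and $b\in\B_0$, and conjugation by $u$ preserves $\B_0$. For operator-valued Haar unitaries normalizing $\B$, the standard computation (as in the scalar case, see Nica–Speicher, Lecture 15) shows the following. The only nonvanishing cumulants are alternating ones of even length $2m$, with $\kappa_{2m}(u b_1,u^*b_2,\ldots,u^*b_{2m})=(-1)^{m-1}C_{m-1}\,(\text{product of the conjugated }b\text{'s})$. I would prove this by induction on $m$ through the moment–cumulant formula \eqref{moment cumulant}. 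The moment $E_L(u^{\epsilon(1)}b_1\cdots u^{\epsilon(n)}b_n)$ equals the group product when the $\Z$-coordinate returns to $0$, and is $0$ otherwise. One then checks that the proposed cumulants reproduce exactly this moment, just as the Catalan-type formula does for scalar Haar unitaries. The same argument shows that non-alternating patterns give zero, which yields condition 1.

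\textbf{Condition 3 and the value.} Taking $\tau_G$ of the resulting element of $\C[L]$, which is a single group element, gives $(-1)^{m-1}C_{m-1}$ exactly when the total word $u_1\beta_1\cdots u_n\beta_n$ is $e_G$, and $0$ otherwise. This is condition 3 and the claimed value.

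\textbf{Main obstacle.} The main difficulty is the operator-valued Haar computation. Because the coefficients $\beta_i$ do not commute with $u$, the induction must track shifts carefully. The cleanest route is to note that any alternating nested moment equals the ordered product $u_1\beta_1\cdots$ or $0$, so the cumulant is this product times a purely scalar Möbius sum. That scalar sum coincides with the scalar Haar unitary case and equals $(-1)^{m-1}C_{m-1}$.
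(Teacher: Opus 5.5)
Your proposal is correct and follows essentially the paper's route. The transfer to $(C_r^*(G),C_r^*(L),E_L)$ via M\"obius inversion, $\Theta\circ\Delta=E_L\circ\Theta$ and $\phi=\tau_G\circ\Theta$ is exactly Lemma \ref{lem:cumulant-convergence}, the $\B_0$-valued Haar computation is Lemma \ref{lem:shift-cumulants}, and the final $\tau_G$ evaluation is the same. The only difference is in the Haar step: you compute the nested moments directly ($(E_L)_\pi(u^{\epsilon_1}\beta_1,\ldots,u^{\epsilon_n}\beta_n)$ equals the ordered product when every block of $\pi$ has zero $\epsilon$-sum and $0$ otherwise) and M\"obius-invert against the scalar Haar moments, whereas the paper checks by induction over interval blocks that the candidate cumulants satisfy the moment--cumulant formula; these are dual forms of the same computation.
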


The proof of this proposition uses two lemmas. The first transfers the limit of finite operator-valued cumulants to cumulants in $(\A_0,\B_0,E_L)$. The second computes the $\B_0$-valued cumulants.

\begin{lemma}[Convergence of scalarized cumulants]\label{lem:cumulant-convergence}
    Let $a_1,\ldots,a_n\in\A$ be cactus field-type monomials and let $a_{i,N}$ be their evaluations at $(V_N,V_N^*,\mathbf D_N)$. Then, we have
    \begin{equation}\label{eq:cumulant-convergence}
        \lim_{N\to\infty}\E\left[\frac1N\mathrm{Tr}\left(\kappa_n^{\B_N}(a_{1,N},\ldots,a_{n,N})\right)\right]=\tau_G\left(\kappa_n^{\B_0}(\Theta(a_1),\ldots,\Theta(a_n))\right).
    \end{equation}
\end{lemma}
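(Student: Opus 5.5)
We will use the Möbius inversion formula \eqref{Mobius inversion} on both sides and then prove the convergence one nested moment at a time. On the finite-$N$ side,
\[
\kappa_n^{\B_N}(a_{1,N},\ldots,a_{n,N})=\sum_{\pi\in NC(n)}\mu(\pi,1_n)\,\Delta_\pi(a_{1,N},\ldots,a_{n,N}).
\]
On the limiting side there is the analogous expansion of $\kappa_n^{\B_0}$ in terms of the nested moments $E_{L,\pi}$. The coefficients $\mu(\pi,1_n)$ do not depend on $N$, and the sum over $NC(n)$ is finite. Since $\frac1N\mathrm{Tr}$ and $\tau_G$ are linear, it therefore suffices to show that for every $\pi\in NC(n)$,
\[
\lim_{N\to\infty}\E\Big[\tfrac1N\mathrm{Tr}\,\Delta_\pi(a_{1,N},\ldots,a_{n,N})\Big]=\tau_G\Big(E_{L,\pi}(\Theta(a_1),\ldots,\Theta(a_n))\Big).
\]

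The key step is to realise the nested moment $\Delta_\pi(a_{1,N},\ldots,a_{n,N})$ as the evaluation of a single graph monomial. Each $a_i$ is a cactus field in $\F$. A product of elements of $\F$ is their concatenation, and applying $\Delta$ corresponds to the graph operation of identifying $v_{in}$ with $v_{out}$, since $\Delta(g(\mathbf A_N))=\Delta(g)(\mathbf A_N)$. So we can define, by induction on the nesting depth of the blocks of $\pi$ (inner blocks first), a formal element $\Delta_\pi(a_1,\ldots,a_n)\in\A$ which is again a cactus-field monomial and whose evaluation at $(V_N,V_N^*,\mathbf D_N)$ is exactly $\Delta_\pi(a_{1,N},\ldots,a_{n,N})$. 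This identity holds at every finite $N$, with no error term, because graph evaluation is multiplicative under concatenation and commutes with $\Delta$.

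Next we apply $\phi$. By definition of $\phi$ and \eqref{eq: state identification},
\[
\lim_N\E\big[\tfrac1N\mathrm{Tr}\,\Delta_\pi(a_N)\big]=\phi(\Delta_\pi(a))=\tau_G\big(\Theta(\Delta_\pi(a))\big).
\]
It remains to show $\Theta(\Delta_\pi(a))=E_{L,\pi}(\Theta(a_1),\ldots,\Theta(a_n))$. This follows by the same induction on nesting, using at each step that $\Theta$ is a homomorphism and that $\Theta\circ\Delta=E_L\circ\Theta$ by \eqref{eq:compact-Theta-B}. Each block operation is a product followed by $\Delta$, and $\Theta$ intertwines both of these operations.

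The main obstacle is the first step. We must check that the nested product really stays inside $\F$ and that the formal operations on graphs coincide with the matrix operations. In particular, $\Delta$ applied to a cactus field closes the spine into a cycle, and this cycle need not be directed, so the result may fail to be a genuine oriented cactus. To avoid depending on this, we would work in the algebra $\C\mathcal G$ of all graph monomials, where the identities $g_1g_2(\mathbf A_N)=g_1(\mathbf A_N)g_2(\mathbf A_N)$ and $\Delta(g)(\mathbf A_N)=\Delta(g(\mathbf A_N))$ hold trivially. We would then use the fact from Section 2 that the limit of $\frac1N\E\mathrm{Tr}$ of the resulting test graph equals the indicator that the word is balanced and equals $e_G$. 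This is precisely $\tau_G\circ\Theta$, with $\Theta$ extended to such monomials via the read-off word, so it is consistent with the definition of $\Theta$ on $\A$.
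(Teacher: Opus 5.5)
Your proposal is correct and takes essentially the same route as the paper. The paper also realizes each nested moment $\Delta_\pi(a_{1,N},\ldots,a_{n,N})$ as the evaluation of a nested cactus-field monomial, proves $\Theta(\Delta_\pi(a_1,\ldots,a_n))=(E_L)_\pi(\Theta(a_1),\ldots,\Theta(a_n))$ by induction from multiplicativity of $\Theta$ and $\Theta\circ\Delta=E_L\circ\Theta$, applies $\phi=\tau_G\circ\Theta$, and concludes by M\"obius inversion term by term.

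The obstacle you flag does not actually arise. Edges are oriented as in matrix multiplication, with $v^*$ carried as a label rather than as a reversed edge, so the spine is a directed path and closing it gives a directed cycle. Hence $\Delta(\F)=\mathcal C$ and all nested products stay in $\A$; only balancedness can fail, and $\Theta$ already handles that, so your fallback to general graph monomials is unnecessary.
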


\begin{proof}
    Since evaluation of graph monomials commutes with multiplication and with the diagonal operation, for any $\pi\in\mathrm{NC}(n)$, and any $a_1,\ldots,a_n\in\A$, we have
    \begin{equation}\label{eq: a prouver par induction}
        (\Delta_\pi(a_1,\ldots,a_n))(V_N,V_N^*,\mathbf{D}_N)=\Delta_\pi(a_{1,N},\cdots,a_{n,N}),
    \end{equation}
    where the first $\Delta$ is the operation on graph monomials and the second one is the diagonal of matrices. This equality justifies that we used the same notation. One can check by induction on the number of blocks of $\pi$ that 
    $$\Theta(\Delta_\pi(a_1,\cdots,a_n))=(E_L)_\pi(\Theta(a_1),\cdots,\Theta(a_n)).$$
    The case $\pi=1_n$ follows by multiplicativity of $\Theta$ and the fact that $\Theta\circ\Delta = E_L\circ\Theta$ and one uses the recursive definition of $\Delta_\pi$ and $E_\pi$ together with the multiplicativity of $\Theta$ to conclude. Using Equation \eqref{eq: state identification} together with \eqref{eq: a prouver par induction} leads to 
    $$\lim_{N\rightarrow\infty}\E\left[\frac1N \mathrm{Tr}(\Delta_\pi(a_{1,N},\cdots,a_{n,N}))\right]=\tau_G((E_L)_\pi(\Theta(a_1),\cdots,\Theta(a_n))).$$ Using Möbius inversion at finite $N$, one can take the limit term by term when $n$ is fixed to conclude the proof.
\end{proof}

Let $$u:=\lambda_G(e_L,1)\in \mathcal{A}_0,\qquad \alpha(b):=ubu^*\in\mathcal{B}_0,\;b\in \B_0.$$
Equivalently, $u=\Theta(v)$, where $v$ represents the graph monomial having a single edge labeled $v$ from input to output. Then $\alpha$ is the automorphism of $\B_0$ induced by the shift on $L$, and $u^n b u^{-n}=\alpha^n(b)$ for $b\in\B_0$ and $n\in\mathbb Z$.

Let $z$ be a scalar Haar unitary. Its scalar free cumulants vanish except for alternating words of even length, and
\begin{equation}\label{eq:haar-cumulants}
    \kappa_{2m}^{\mathbb C}(z,z^*,\ldots,z,z^*)=\kappa_{2m}^{\mathbb C}(z^*,z,\ldots,z^*,z)=(-1)^{m-1}C_{m-1},
\end{equation}
where $C_k$ is the $k$-th Catalan number.
This is the standard Haar-unitary cumulant formula (see \cite[Proposition~15.1]{nica_lectures_2006}).

\begin{lemma}[The $\B_0$-valued cumulants of $u$]\label{lem:shift-cumulants}
Let $\epsilon_1,\ldots,\epsilon_n\in\{1,-1\}$ and $b_1,\ldots,b_n\in\B_0$. Put $p_j=\epsilon_1+\cdots+\epsilon_j$. Then
\begin{equation}\label{eq:shift-cumulants}
    \kappa_n^{\B_0}(u^{\epsilon_1}b_1,\ldots,u^{\epsilon_n}b_n)=\kappa_n^{\mathbb C}(z^{\epsilon_1},\ldots,z^{\epsilon_n})\prod_{j=1}^n\alpha^{p_j}(b_j).
\end{equation}
\end{lemma}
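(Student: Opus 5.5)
Write $x_j:=u^{\epsilon_j}b_j$. The plan is to compute the $\B_0$-valued moments of the $x_j$ explicitly, check that they agree with the moment–cumulant formula applied to the claimed cumulants, and conclude by uniqueness. The uniqueness step is routine: the operator-valued moment–cumulant relation \eqref{moment cumulant} determines the cumulants recursively from the moments.

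\textbf{Step 1: moments.} Move all the $b_j$ to the right using $u^{p}b = \alpha^{p}(b)u^{p}$. This gives
\[
x_1\cdots x_n=\Bigl(\prod_{j=1}^n\alpha^{p_j}(b_j)\Bigr)u^{p_n}.
\]
Now $E_L(bu^{p})=b\,E_L(u^p)=b\,\delta_{p,0}$, since $u^p=\lambda_G(e_L,p)$ lies in $L$ only when $p=0$. Hence
\[
E_L(x_1\cdots x_n)=\delta_{p_n,0}\prod_{j}\alpha^{p_j}(b_j).
\]
The scalar Haar unitary $z$ has $\varphi(z^{\epsilon_1}\cdots z^{\epsilon_n})=\delta_{p_n,0}$. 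So the $\B_0$-moment equals the scalar moment $\varphi(z^{\epsilon_1}\cdots z^{\epsilon_n})$ times the fixed element $\beta:=\prod_j\alpha^{p_j}(b_j)$. The factors of $\beta$ commute, because $\B_0=C^*_r(L)$ is commutative ($L$ is abelian).

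\textbf{Step 2: nested moments.} We need the same factorization for every $\pi\in NC(n)$:
\[
(E_L)_\pi(x_1,\ldots,x_n)=\varphi_\pi(z^{\epsilon_1},\ldots,z^{\epsilon_n})\,\beta.
\]
We argue by induction on the number of blocks. Take an inner interval block $\{k,\ldots,l\}$. Its value is $\delta\, c$, where $\delta:=\delta_{p_l-p_{k-1},0}$ and $c:=\prod_{j=k}^l\alpha^{p_j-p_{k-1}}(b_j)$. This value gets multiplied into $b_{k-1}$, producing $x_{k-1}' = u^{\epsilon_{k-1}}b_{k-1}\,\delta\, c$.

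The remaining reduced word has partial sums $p'_j$. These coincide with the original $p_j$ for $j\le k-1$, and are shifted back by exactly $p_l-p_{k-1}=0$ for later indices whenever $\delta\neq 0$. The new element $b_{k-1}c$ sits at position $k-1$ with partial sum $p_{k-1}$, so it contributes
\[
\alpha^{p_{k-1}}(b_{k-1})\,\alpha^{p_{k-1}}(c)=\alpha^{p_{k-1}}(b_{k-1})\prod_{j=k}^l\alpha^{p_j}(b_j).
\]
If $k=1$, the interval's value is placed at the front; since $p_0=0$ the same computation applies. Thus the product of $\alpha$-twisted elements reassembles into $\beta$, the scalar factors multiply exactly as in $\varphi_\pi$, and commutativity of $\B_0$ lets us ignore the ordering of factors.

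\textbf{Step 3: conclusion.} Define the candidate cumulants $\tilde\kappa_\pi:=\kappa_\pi^{\C}(z^{\epsilon})\,\beta$. The nested version of these candidates satisfies the same factorization, by the same bookkeeping as in Step 2. Summing over $\pi\in NC(n)$ and applying \eqref{moment cumulant} to $z$ shows that the $\tilde\kappa$ reproduce the $(E_L)$-moments. By uniqueness of cumulants, $\kappa_n^{\B_0}=\tilde\kappa_n$, which is \eqref{eq:shift-cumulants}.

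The main obstacle is Step 2: tracking how the partial sums $p_j$ shift when an inner block is collapsed and placed into its neighbour. The key point is that a block can have nonzero scalar value only if it is balanced, which is exactly what keeps the outer partial sums unchanged.
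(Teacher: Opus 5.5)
Your proof is correct and follows the paper's route: compute the $E_L$-moments by commuting the $b_j$ past powers of $u$, show by induction on interval blocks that the nested candidate cumulants factor as $\kappa_\pi^{\C}(z^{\epsilon_1},\ldots,z^{\epsilon_n})\prod_j\alpha^{p_j}(b_j)$ (a block with nonzero scalar factor is balanced, so the outer partial sums are unchanged), then sum over $NC(n)$ and invoke uniqueness of cumulants. Your Step 2 on nested moments is not needed for Step 3's conclusion, although Step 2 combined with the M\"obius inversion formula \eqref{Mobius inversion} would already give the result directly, without introducing candidate cumulants.
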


\begin{proof}
    We first compute the moments. The relation $u^kb=\alpha^k(b)u^k$ gives, 
    \begin{equation}\label{eq: reorder}
        u^{\epsilon_1}b_1\cdots u^{\epsilon_n}b_n=\left(\prod_{j=1}^n\alpha^{p_j}(b_j)\right)u^{p_n}.
    \end{equation}
    Since $E_L(bu^k)=0$ for $k\neq0$ and $E_L(b)=b$ for $b\in\B_0$, we obtain
    $$E_L(u^{\epsilon_1}b_1\cdots u^{\epsilon_n}b_n)=\1_{\{p_n=0\}}\prod_{j=1}^n\alpha^{p_j}(b_j).$$ 
    We now show that the right-hand side of \eqref{eq:shift-cumulants} satisfies the operator-valued moment-cumulant formula. Let 
    $$K_n(u^{\epsilon_1}b_1,\cdots,u^{\epsilon_n}b_n):=\kappa_n^{\mathbb C}(z^{\epsilon_1},\ldots,z^{\epsilon_n})\prod_{j=1}^n\alpha^{p_j}(b_j),$$
    and, for $\pi\in NC(n)$, define $K_\pi$ from $K_n$ with the nested-formula of operator-valued cumulants.  Let us show that 
    \begin{equation}\label{eq: nested-cumulant}
        K_\pi(u^{\epsilon_1}b_1,\cdots,u^{\epsilon_n}b_n)=\kappa_\pi^\C(z^{\epsilon_1},\cdots,z^{\epsilon_n})\prod_{j=1}^{n}\alpha^{p_j}(b_j).
    \end{equation}
    If $\pi=1_n$, this is by definition. Assume $\pi$ has at least 2 blocks and let $V=\{q+1,\cdots,q+l\}$ be an interval of $\pi$. Let $r_j=\epsilon_{q+1}+\cdots+\epsilon_j$ for $q+1\leq j \leq q+l$. We thus have
    \begin{align*}
        K_l(u^{\epsilon_{q+1}}b_{q+1},\cdots,u^{\epsilon_{q+l}}b_{q+l})&=\kappa_l^\C(z^{\epsilon_{q+1}},\cdots,z^{\epsilon_{q+l}})\prod_{j=q+1}^{q+l}\alpha^{r_j}(b_j),
    \end{align*}
    which leads to 
    \begin{align*}
        K_\pi(u^{\epsilon_1}b_1,\cdots,u^{\epsilon_n}b_n)=\kappa_l^{\C}(&z^{\epsilon_{q+1}},\cdots,z^{\epsilon_{q+l}})\\
        &K_{\pi\setminus V}\left(u^{\epsilon_1}b_1,\dots,u^{\epsilon_q}b_q\prod_{j=q+1}^{q+l}\alpha^{r_j}(b_j),u^{\epsilon_{l+q+1}}b_{l+q+1},\cdots,u^{\epsilon_n}b_n\right).
    \end{align*}
    We apply the induction hypothesis with $\pi'=\pi\setminus V$ and $$b'_1=b_1,\cdots,b'_{q-1}=b_{q-1},\quad b'_q=b_q\prod_{j=q+1}^{q+l}\alpha^{r_j}(b_j),\quad b'_{q+1}=b_{l+q+1},\cdots,b'_{n-l}=b_n,$$
    to obtain 
    $$K_\pi(u^{\epsilon_1}b_1,\cdots,u^{\epsilon_n}b_n)=\kappa_\pi^{\C}(z^{\epsilon_1},\cdots,z^{\epsilon_n})\prod_{j=1}^{q-1}\alpha^{p_j}(b_j)\alpha^{p_q}(b'_q)\prod_{j=l+q+1}^{n}\alpha^{p_j-r_{q+l}}(b_j).$$
    Since $\alpha$ is a $*$-homomorphism and $p_q+r_j=p_j$ by definition, we have that 
    \begin{align*}
        \alpha^{p_q}(b'_q)=\alpha^{p_q}(b_q)\prod_{j=q+1}^{q+l}\alpha^{p_q+r_j}(b_j)=\prod_{j=q}^{q+l}\alpha^{p_j}(b_j).
    \end{align*}
    If $r_{q+l}=0$, it concludes the proof of Equation \eqref{eq: nested-cumulant}. On the other hand, if $r_{q+l}\neq 0$, either $l$ is odd and $\kappa_l^{\C}(\cdots)=0$, either $l$ is even and the signs  $\epsilon_{q+1},\dots,\epsilon_{q+l}$ cannot be alternated so $\kappa_l^{\C}(z^{\epsilon_{q+1}},\dots,z^{\epsilon_{q+l}})=0$ but so does $K_l(u^{\epsilon_{q+1}}b_{q+1},\dots,u^{\epsilon_{q+l}}b_{q+l})$. Hence both $K_\pi$ and $\kappa_\pi^\C$ are equal to zero which concludes the proof of Equation \eqref{eq: nested-cumulant}. 

    Let us now sum Equation \eqref{eq: nested-cumulant} over all non-crossing partitions $\pi$, 
    \begin{align*}
        \sum_{\pi\in NC(n)}K_\pi(u^{\epsilon_1}b_1,\ldots,u^{\epsilon_n}b_n)&=\left(\sum_{\pi\in NC(n)}\kappa_\pi^{\mathbb C}(z^{\epsilon_1},\ldots,z^{\epsilon_n})\right)\prod_{j=1}^n\alpha^{p_j}(b_j)\\
        &=\varphi (z^{\epsilon_1}\cdots z^{\epsilon_n})\prod_{j=1}^n\alpha^{p_j}(b_j)\\
        &=\1_{\{p_n=0\}}\prod_{j=1}^n\alpha^{p_j}(b_j),\\
        &=E_L(u^{\epsilon_1}b_1\cdots u^{\epsilon_n}b_n),
    \end{align*}    
    where $\varphi$ is the state of the Haar random variable $z$, it thus verifies $\varphi(z^n)=\1_{n=0}$. Since operator-valued cumulants are characterized by the moment-cumulant relation, it concludes the proof.
\end{proof}


\begin{proof}[Proof of Proposition \ref{prop: asymptotic op val cumulants}]
    Let $a_i:=v^{\epsilon_i}b_i$ denote the graph monomial having an edge labeled $v^{\epsilon_i}$ and a loop $b_i$ attached to it (with the convention that if $\epsilon_i={-1}$, the edge is labeled $v^{*}$). By Lemma \ref{lem:cumulant-convergence}, we have 
    \begin{equation}\label{eq: first eq proof prop}
        \lim_{N\to\infty}\E\left[\frac{1}{N} \mathrm{Tr}\left(\kappa_r^{\B_N}\bigl(V_N^{\epsilon(1)}b_{1,N},\ldots,V_N^{\epsilon(n)}b_{n,N}\bigr)\right)\right]=\tau_G(\kappa_n^{\B_0}(\Theta(a_1),\cdots,\Theta(a_n))).
    \end{equation}

    First assume that one of the cacti, say $b_i$ is not balanced. Then $\Theta(b_i)=0$, thus $\Theta(a_i)=0$. Hence the right-hand side of \eqref{eq: first eq proof prop} is zero by linearity of $\kappa_n^{\B_0}$.

    From now on, assume that every $b_i$ is balanced. We saw, in the beginning of this section that for all $i\in[n]$, there exists $f_i:\Z\rightarrow\Lambda$ with finite support such that $\Theta(b_i)=\lambda_L(f_i)\in\B_0$. Hence, for all $i\in[n]$, we have $\Theta(a_i)=u^{\epsilon_i}\lambda_L(f_i)$. By Lemma \ref{lem:shift-cumulants}, keeping the same notations, we have
    \begin{equation*}
        \kappa_n^{\B_0}(\Theta(a_1),\cdots,\Theta(a_n))=\kappa_n^{\C}(z^{\epsilon_1},\cdots,z^{\epsilon_n})\prod_{j=1}^{n}\alpha^{p_j}(\lambda_L(f_j)).
    \end{equation*}
    If $n$ is odd or if the $\epsilon_i$'s do not alternate, the scalar cumulant on the right-hand side above is zero. 

    From now on, we assume $n=2m$ and the $\epsilon_i$'s are alternating. Hence, in that case, we have $p_2m=0$ and 
    \begin{equation}\label{eq: cumulants preuve prop}
        \kappa_n^{\B_0}(\Theta(a_1),\cdots,\Theta(a_n))=(-1)^{m-1}C_{m-1}\prod_{j=1}^{2m}\alpha^{p_j}(\lambda_L(f_j)).
    \end{equation}
    Note that for $p\in\Z$ and $f\in L$, we have
    \begin{align*}
        \alpha^p(\lambda_L(f))=u^p\lambda_L(f)u^{-p}=\lambda_G((e_L,p)(f,0)(e_L,-p))=\lambda_G(\sigma_p(f),0)=\lambda_L(\sigma_p(f)),
    \end{align*}
    where $\sigma_p(f):k\in\Z\mapsto f(k-p)\in\Lambda$ is the shift. Hence, the product on the right-hand side of \eqref{eq: cumulants preuve prop} rewrites as 
    $$\prod_{j=1}^{2m}\alpha^{p_j}(\lambda_L(f_j))=\lambda_L\left(\prod_{j=1}^{2m}\sigma_{p_j}(f_j)\right),$$
    where the product is taken in order from $j=1$ to $n$. Since $\tau_G$ extracts the coefficient of $e_G$, we have
    $$\tau_G(\kappa_n^{\B_0}(\Theta(a_1),\dots,\Theta(a_{2m})))=(-1)^{m-1}C_{m-1}\1_{\prod\sigma_{p_j}(f_j)=e_L},$$
    so it remains to prove that $\prod_{j=1}^{2m}\sigma_{p_j}(f_j)=e_L$ if and only if $w(v^{\epsilon_1}b_1\cdots v^{\epsilon_{2m}}b_{2m})=e_G$. However, this is easily seen when we reorder the product similarly to \eqref{eq: reorder}, since $p_{2m}=0$ we have 
    \begin{align*}
        \prod_{j=1}^{2m}\sigma_{p_j}(f_j)=e_L&\Longleftrightarrow u^{\epsilon_1}\lambda_L(f_1)\cdots u^{\epsilon_{2m}}\lambda_L(f_{2m})=\lambda_G(e_G),\\
        &\Longleftrightarrow \Theta(v^{\epsilon_1}b_1\cdots v^{\epsilon_{2m}}b_{2m})=e_G.
    \end{align*}
    Since each $b_j$ is balanced, it concludes the proof.
\end{proof}

\subsection{The $R$-transform of $u+u^*$}

We define formally the $R$-transform as the formal power series
$$ R_x^{\B_0}(b):=\sum_{n\geq0}\kappa_{n+1}^{\B_0}(xb,\ldots,xb,x),$$
see \cite{dykema_multilinear_2005} or
\cite[Chapter~10]{mingo_free_2017}.

\begin{proposition}\label{analytic R transform}
Let $x=u+u^*$.  For $\|b\|<1/2$,
\begin{align*}
 R_x^{\B_0}(b)&=\sum_{m\geq0}(-1)^mC_m\left((\alpha(b)b)^m\alpha(b)+(\alpha^{-1}(b)b)^m\alpha^{-1}(b)\right)\\
 &=2\alpha(b)\bigl(1+\sqrt{1+4b\alpha(b)}\bigr)^{-1}+2\alpha^{-1}(b)\bigl(1+\sqrt{1+4b\alpha^{-1}(b)}\bigr)^{-1}.
\end{align*}
The square roots are the principal square roots given by functional calculus. 
\end{proposition}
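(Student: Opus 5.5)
The plan is to expand $R_x^{\B_0}(b)=\sum_{n\ge0}\kappa_{n+1}^{\B_0}(xb,\ldots,xb,x)$ by multilinearity in $x=u+u^*$ and then apply Lemma \ref{lem:shift-cumulants} to each term. Writing $x=u^{1}+u^{-1}$, each cumulant $\kappa_{n+1}^{\B_0}(xb,\ldots,xb,x)$ becomes a sum over sign sequences $\epsilon\in\{\pm1\}^{n+1}$ of $\kappa_{n+1}^{\B_0}(u^{\epsilon_1}b,\ldots,u^{\epsilon_n}b,u^{\epsilon_{n+1}}\cdot 1)$. By the lemma, this term equals $\kappa^{\C}_{n+1}(z^{\epsilon_1},\ldots,z^{\epsilon_{n+1}})\prod_{j=1}^{n}\alpha^{p_j}(b)\,\alpha^{p_{n+1}}(1)$.

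By \eqref{eq:haar-cumulants}, only alternating sequences of even length $n+1=2m+2$ survive. There are exactly two of them: $(1,-1,1,\ldots,-1)$ and $(-1,1,\ldots,1)$.

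For the first sequence the partial sums $p_j$ alternate $1,0,1,0,\ldots$. The product therefore reads $\alpha(b)\,b\,\alpha(b)\,b\cdots\alpha(b)$, with $2m+1$ factors of $b$, which is $(\alpha(b)b)^m\alpha(b)$. The second sequence gives $(\alpha^{-1}(b)b)^m\alpha^{-1}(b)$ in the same way. Both carry the coefficient $(-1)^mC_m$, since $\kappa_{2(m+1)}^{\C}=(-1)^{m}C_{m}$.

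Summing over $m\ge0$ yields the first displayed formula as an identity of formal power series. Note that the order of the factors matters because $\B_0$ is commutative here ($L$ is abelian), but keeping the order costs nothing.

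For the closed form, convergence comes first. Since $\alpha$ is an automorphism of the $C^*$-algebra $\B_0$ (conjugation by a unitary), it is isometric, so $\|\alpha(b)b\|\le\|b\|^2<1/4$. As $C_m\le 4^m$, the series converges absolutely in norm for $\|b\|<1/2$.

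Next I would use the Catalan generating function $\sum_{m\ge0}(-1)^mC_m w^m=\frac{2}{1+\sqrt{1+4w}}$, valid for $|w|<1/4$ with the principal branch. I would apply it through the holomorphic functional calculus to $w=\alpha(b)b$, whose spectrum lies in the disc of radius $1/4$, where the principal square root of $1+4w$ is holomorphic. Then $\sum_m(-1)^mC_m(\alpha(b)b)^m=2(1+\sqrt{1+4\alpha(b)b})^{-1}$, and right-multiplying by $\alpha(b)$ gives the first term.

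To match the stated form $\alpha(b)\,(1+\sqrt{1+4b\alpha(b)})^{-1}$, I would use either commutativity of $\B_0$ or the general identity $(\alpha(b)b)^m\alpha(b)=\alpha(b)(b\alpha(b))^m$. The latter lets me pull $\alpha(b)$ to the left without assuming commutativity. The $\alpha^{-1}$ term is identical.

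I expect the only real care to be in this last step. One must check that the functional calculus of the power series agrees with $2(1+\sqrt{1+4w})^{-1}$, which holds because both are holomorphic on a neighbourhood of the spectrum and agree as power series. One must also check that $1+\sqrt{1+4w}$ is invertible there, which holds since its real part is positive on the disc $|w|<1/4$.
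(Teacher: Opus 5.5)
Your proposal is correct and follows the paper's proof: expand $\kappa_{n+1}^{\B_0}(xb,\ldots,xb,x)$ by multilinearity, keep only the two alternating sign patterns of each even order, bound the terms using $C_m\le 4^m$ and the fact that $\alpha$ is an isometry, and sum with the Catalan generating function in $\B_0$. Citing Lemma~\ref{lem:shift-cumulants} directly is actually a more accurate justification for an arbitrary $b\in\B_0$ than the paper's reference to Proposition~\ref{prop: asymptotic op val cumulants}, which only gives traced limits for cactus-type $b_i$; your identity $(\alpha(b)b)^m\alpha(b)=\alpha(b)(b\alpha(b))^m$ also removes the need for commutativity (your sentence saying the order ``matters because $\B_0$ is commutative'' should read ``does not matter'').
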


\begin{proof}
By Proposition \ref{prop: asymptotic op val cumulants}, only the two alternating sign patterns in each even order survive giving the first equation. Since $C_m\leq4^m$ and $\alpha$ is an isometry, each summand is bounded by $4^m\|b\|^{2m+1}$; hence the series converges absolutely for
$\|b\|<1/2$.

The Catalan generating function is given, for $|z|<1/4,$ by
$$ \sum_{m\geq0}(-1)^m C_m z^m =\frac{2}{1+\sqrt{1+4z}}.$$
Applying it in the commutative $C^*$-algebra $\B_0$ gives the second equation. Note that $\B_0$ is commutative since $\Lambda$ is abelian. The principal square root exists because $\|4b\alpha^{\pm1}(b)\|<1$.
\end{proof}

    \section{Second-order distribution}

Once we have constructed the limiting operator-valued $C^{*}$-probability
space $\left(\A_{0},\B_{0},E_{L}\right)=\left(C_{r}^{*}(G),C_{r}^{*}(L),E_{L}\right),$we can express the limiting covariance between two graph monomials in terms of the first order distribution of their product, up to a symmetrization over the position of the lamplighter. In this subsection, we remain in the case $\Gamma=\Z$. We recall from the introduction that an element of $G$ is a couple $(f,g)$, where $f:\Gamma\rightarrow\Lambda$ has finite support and $g\in\Gamma$. We denote by $e_{L}$ the constant function equal to the identity of $\Lambda$ and by $e_{\Gamma}=0$ the identity of $\Gamma=\Z$. In the rest of this subsection, we use the additive notation on $\Gamma$, so that $gg'=g+g'$ and $g^{-1}=-g$. For clarity, in the following, for $(f,g)\in G$, we denote
\begin{equation*}
    U_{f,g}:=\lambda_{G}(f,g)\in C_{r}^{*}(G).
\end{equation*}
It follows from the product rule that
\begin{equation*}
    U_{f,g}U_{f',g'}=U_{f\cdot(f'\circ g^{-1}),g+g'},\qquad U_{f,g}^{*}=U_{f^{-1}\circ g,-g}.
\end{equation*}
The linear span of the family $(U_{f,g})_{(f,g)\in G}$ is the group algebra
$\C[G]$, which is dense in $\A_{0}$. Moreover, these elements are orthogonal for $\tau_G$ : $\tau_{G}\left(U_{f,g}^{*}U_{f',g'}\right) =\1_{(f,g)=(f',g')}$. Hence, the same family is an orthonormal basis of $L^{2}(C_{r}^{*}(G),\tau_{G})$. In the following, we only consider finite linear combinations of these unitaries.

For each couple $(f,g)$, we choose the canonical graph monomial obtained as follows. We consider the smallest interval of $\Z$ containing $\{0,g\}\cup\supp(f)$, draw the directed line on this interval, take $0$ as input and $g$ as output, and attach at every vertex $k$ the loops corresponding to $f(k)$. Its image in $C_{r}^{*}(G)$ through $\Theta$ is indeed $U_{f,g}$. If $x\in\C[G]$, we denote by $x_{N}$ the corresponding finite linear combination of graph monomials evaluated at the random matrices of the model.

\begin{definition}\label{def:first-second-functionals-fg}
    Let $x,y\in\C[G]$. The first and second order distributions are defined by
    \begin{align*}
        \Phi^{(1)}(x) &:=\lim_{N\rightarrow\infty}\frac{1}{N}\E\left[\mathrm{Tr}(x_{N})\right],\\
        \Phi^{(2)}(x,y)&:=\lim_{N\rightarrow\infty}\frac{1}{N}\E\left[\left(\mathrm{Tr}(x_{N})-\E[\mathrm{Tr}(x_{N})]\right)\left(\mathrm{Tr}(y_{N})-\E[\mathrm{Tr}(y_{N})]\right)\right].
    \end{align*}
\end{definition}

\begin{remark}\label{remark:first-second-fg}
    \begin{itemize}
        \item The existence of the above limits follows from the first order computation Section 2 and from the central limit theorem of Section 3.
        \item The first order distribution is the canonical trace of the reduced group $C^{*}$-algebra. More precisely, for all $x\in\C[G]$,
        \begin{equation}\label{eq:Phi1-trace-expectation-fg}
            \Phi^{(1)}(x)=\tau_{G}(x)=\tau_{G}(E_{L}(x)).
        \end{equation}
    \end{itemize}
\end{remark}

Note that $\Phi^{(2)}$ is a priori well-defined of graph polynomials but not on elements of $\C[G]$. Let us define $\Tilde{\Phi}^{(2)}$ the version of $\Phi^{(2)}$ defined on graph polynomial. It could be possible that a graph polynomial $a\in\A$ verifies $\Theta(a)=0$ but $\Tilde{\Phi}^{(2)}(a,b)\neq 0$ for some $b\in\A$. However, it is never the case. Indeed, first decompose $a=a_1+a_2$ where $a_1$ is a linear combination of balanced graph monomials and $a_2$ is a linear combination of non-balanced graph monomials. Thus $0=\Theta(a)=\Theta(a_1)$ since $\Theta(a_2)=0$ by definition. Note that this implies that $a_{1,N}:=a_1(V_N,\mathbf{D}_N)=0$ which is stronger than $a_1=0$.  Indeed, for a balanced cactus $c$, its evaluation at finite $N$ reduces to take the product of the matrices one encounters as one reads the labels around the contour of the cactus. Since $\Lambda$ is abelian, if two planted cycles are attached to another one through the same vertex, it does not matter which one we visit first. Hence, $a_N=a_{2,N}$. However, since $a_2$ is not balanced, the variance of $a_{2,N}$ is at most of order 1 (see the proof of Proposition \ref{TCL}). This shows by Cauchy Schwarz that the covariance between $a$ and $b$ is at most of order $N^{-1/2}$ so that $\Tilde{\Phi}^{(2)}(a,b)=0$ for any graph monomial $b$. Hence, $\Theta(a)=\Theta(a')$ implies $\Tilde{\Phi}^{(2)}(a,b)=\Tilde{\Phi}^{(2)}(a',b)$ so that $\Phi^{(2)}$ is well defined on $\A/\mathrm{Ker}\Theta\cong \C[G]$. 

We can now compute the first and second order distributions of the family $(U_{f,g})_{(f,g)\in G}$.

\begin{proposition}\label{prop:Phi1-Phi2-fg}
    Let $f,f'\in L$ and $g,g'\in\Gamma$. We have
    \begin{equation}\label{eq:Phi1-U-fg}
        \Phi^{(1)}(U_{f,g})=\tau_{G}(U_{f,g})=\1_{f=e_{L},\;g=e_{\Gamma}}.
    \end{equation}
    Moreover, if $g\neq e_{\Gamma}$ or $g'\neq e_{\Gamma}$,
    \begin{equation}\label{eq:Phi2-non-closed-fg}
        \Phi^{(2)}(U_{f,g},U_{f',g'})=0 .
    \end{equation}
    Finally, when $g=g'=e_{\Gamma}$,
    \begin{align}
        \Phi^{(2)}(U_{f,e_{\Gamma}},U_{f',e_{\Gamma}})
        &=\1_{f\neq e_{L}}\1_{f'\neq e_{L}}
        \sum_{h\in\Gamma}
        \Phi^{(1)}\left(U_{f,h}U_{f',h^{-1}}\right)
        \label{eq:Phi2-first-order-fg}\\
        &=\1_{f\neq e_{L}}\1_{f'\neq e_{L}}
        \#\left\{h\in\Gamma:\
        f\cdot(f'\circ h^{-1})=e_{L}\right\}.
        \label{eq:Phi2-count-fg}
    \end{align}
\end{proposition}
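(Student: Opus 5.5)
The proof proceeds in three steps: the first-order identity, the vanishing for non-closed base paths, and the covariance computation for closed ones.

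\textbf{First-order formula.} Equation \eqref{eq:Phi1-U-fg} follows from Remark \ref{remark:first-second-fg}. The canonical graph monomial $a$ of $(f,g)$ satisfies $\Theta(a)=U_{f,g}$. By \eqref{eq: state identification}, $\Phi^{(1)}(U_{f,g})=\tau_G(U_{f,g})$. Since $\tau_G$ extracts the coefficient of $e_G$, this is $\1_{f=e_L,g=e_\Gamma}$.

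\textbf{Vanishing when $g\neq e_\Gamma$ or $g'\neq e_\Gamma$.} Here I would use the canonical representative. Its trace is the trace of a test graph obtained by gluing input and output. The underlying $v$-graph is then a segment of the directed line, closed up by identifying $0$ with $g$, and it carries an unbalanced cycle when $g\neq 0$. In the proof of Theorem \ref{TCL}, the relevant $Z_N(T)$'s come from the injective expansion $T^{\pi}$ of this trace. For such an unbalanced configuration, each $T^\pi$ either has vanishing expectation or has $\bar T$ a $v$-cycle in which the numbers of $v$ and $v^*$ edges differ. In the limiting covariance $M^{(2)}(T_1,T_2)$ only configurations whose colored components are directed lines survive. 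The unbalanced closing cycle cannot be turned into a directed line by any gluing $\sigma$, so it contributes $o(1)$. Hence $\Phi^{(2)}=0$. Equivalently, I would invoke the final formula of the CLT proof, in which a component whose $v$-restriction is a cycle carries a factor $N^{-1}$.

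\textbf{The case $g=g'=e_\Gamma$.} Take the canonical monomials. Each is a directed line of length $\ell$ (respectively $\ell'$) closed up at $0$, decorated by the lamp loops encoding $f$ (respectively $f'$). I would expand $\mathrm{Tr}$ into injective traces over partitions $\pi_1,\pi_2$ and apply the covariance formula $M^{(2)}$ from the CLT proof. That formula requires both $T_1^{\pi_1}$ and $T_2^{\pi_2}$ to be non-valid, and $T_1\sqcup T_2$ glued by $\sigma$ to be valid with $v$-part a directed line.

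For a self-returning base path that is not folded onto itself, the only relevant $\pi_1$ folds the walk into its range, a directed line with the lamp loops $f(k)$ attached at vertex $k$. This picture is non-valid exactly when $f\neq e_L$; when $f=e_L$ the trace is deterministic up to $O(1)$ fluctuations, which gives the indicators. A gluing $\sigma$ of two directed lines into a single directed line that contributes at leading order corresponds to an overlap, that is, to a relative shift $h\in\Z$ aligning the two lines. The glued graph is valid iff at each site the lamps cancel, i.e. $f\cdot(f'\circ h^{-1})=e_L$. Rewriting via $U_{f,h}U_{f',-h}=U_{f\cdot(f'\circ h^{-1}),0}$ and \eqref{eq:Phi1-U-fg} gives \eqref{eq:Phi2-first-order-fg} and \eqref{eq:Phi2-count-fg}.

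\textbf{Main obstacle.} The delicate point is the bookkeeping in this last step. One must check that non-overlapping shifts, where the supports of the two lines are disjoint, are excluded: the glued graph must then be disconnected, and a pair that is not connected by $\sigma$ falls outside $\overline{\sigma}=\pi$. Since $f,f'\neq e_L$, such a non-overlapping $h$ also fails $f\cdot(f'\circ h^{-1})=e_L$, so both sides agree. One must also check that each admissible $h$ is counted exactly once, given that the partial gluings along a line of lamps are uniquely determined by the shift. I would verify this by noting that the lamp configuration determines the vertices carrying non-trivial loops, which forces the identification of the two lines.
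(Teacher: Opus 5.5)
Your proposal is correct and follows essentially the same route as the paper. The first-order identity comes from $\phi=\tau_G\circ\Theta$. For $g\neq e_\Gamma$ the covariance vanishes because closing the canonical monomial creates an unbalanced $v$-cycle, which is negligible in the Section 3 covariance. For $g=g'=e_\Gamma$, contributing gluings of the two decorated directed lines are parametrized by a relative shift $h$, and such a gluing is valid exactly when $f\cdot(f'\circ h^{-1})=e_L$.

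One correction concerns your last paragraph. A single identification forces the whole overlap, so each $h$ is counted once, because $V_N$ has one nonzero entry per row and column: a glued vertex with two distinct $v$-successors or predecessors kills the injective trace. This forcing does not come from the lamp configuration, as you suggest.
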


\begin{proof}
    Equation \eqref{eq:Phi1-U-fg} follows directly from Equation \eqref{eq:Phi1-trace-expectation-fg} and from the definition of the canonical trace on $C_{r}^{*}(G)$.

    Let us now consider the second order distribution. If $g\neq e_{\Gamma}$, identifying the input and the output of the canonical graph associated to $U_{f,g}$ creates an unbalanced cycle. Such a graph does not contribute to the limiting covariance computed in Section 3. The same argument applies when $g'\neq e_{\Gamma}$, which proves Equation \eqref{eq:Phi2-non-closed-fg}.

    We now assume that $g=g'=e_{\Gamma}$. If $f=e_{L}$, the corresponding trace is deterministic and its centered version is zero. The same holds when $f'=e_{L}$. We can therefore assume that both lamp configurations are non-trivial. A non-zero term in the covariance is obtained by gluing the two closed graph monomials along their directed lines. Once a vertex of the first line is identified with a vertex of the second one, all the remaining identifications are forced. Such a gluing is consequently determined by an element $h\in\Gamma$, which describes the relative position of the two lamp configurations. The resulting graph contributes
    if and only if $f\cdot(f'\circ h^{-1})=e_{L},$ which concludes the proof.
\end{proof}

Equation \eqref{eq:Phi2-count-fg} has the following simple interpretation : the second order distribution counts whether the lamp configurations can be obtained from one another by a simple translation along $\Gamma$. We now use this observation to recover the second order distribution from the first order one after averaging over the position of the lamplighter.

\begin{definition}\label{def:En-fg}
    For $f\in L$, we denote $\diam(f):=\inf\left\{n\in\N:\\supp(f)\subset\llbracket -n,n\rrbracket\right\}$, and $\diam(e_{L})=0.$ For $n\geq0$, we define $\mathcal E_{n}:=\operatorname{span}\left\{U_{f,g}:\\diam(f)\leq n,\ |g|\leq n\right\}.$ We then have
    \begin{equation*}
        \bigcup_{n\geq0}\mathcal E_{n}=\C[G],\qquad\overline{\bigcup_{n\geq0}\mathcal E_{n}}^{\|\cdot\|}=\A_{0}.
    \end{equation*}
\end{definition}

We have now all the ingredients to state the relation between the first and second order distributions.

\begin{proposition}\label{prop: second-order distrib}
    Let $m,n\geq0$, $x\in\mathcal E_{n}$ and $y\in\mathcal E_{m}$. Assume that for all $k\in\Gamma=\Z$,
    \begin{equation}\label{eq:no-pure-shift}
        \Phi^{(1)}\left(xU_{e_{L},k}\right)=\Phi^{(1)}\left(yU_{e_{L},k}\right)=0.
    \end{equation}
    For $k\in\Z$, we define $x^{(k)}:=xU_{e_{L},-k}=xu^{-k}$ and $ y^{(k)}:=yU_{e_{L},-k}=yu^{-k}.$ For $R,S\geq0$, let
    \begin{equation*}
        X^{(R)}:=\sum_{k=-R}^{R}x^{(k)},\qquad Y^{(S)}:=\sum_{k=-S}^{S}y^{(k)}.
    \end{equation*}
    If $R\geq2n+m,$ and $S\geq n+2m,$ then
    \begin{equation}\label{eq:first-second-symmetrization}
        \Phi^{(2)}\left(X^{(R)},Y^{(S)}\right)=\Phi^{(1)}\left(X^{(R)}Y^{(S)}\right).
    \end{equation}
\end{proposition}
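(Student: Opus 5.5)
Both sides of \eqref{eq:first-second-symmetrization} are bilinear in $(x,y)$, so the plan is to expand $x$ and $y$ in the orthonormal basis $(U_{f,g})$ and compute each side with Proposition \ref{prop:Phi1-Phi2-fg}. Write
\[
x=\sum_{(f,g)}c_{f,g}U_{f,g},\qquad y=\sum_{(f',g')}c'_{f',g'}U_{f',g'},
\]
where the sums are finite, $\diam(f)\le n$ and $|g|\le n$ (resp.\ $\diam(f')\le m$ and $|g'|\le m$).

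The first step is to decode hypothesis \eqref{eq:no-pure-shift}. Since $xU_{e_L,k}=\sum c_{f,g}U_{f,g+k}$, Equation \eqref{eq:Phi1-U-fg} gives $\Phi^{(1)}(xU_{e_L,k})=c_{e_L,-k}$. The hypothesis is therefore exactly that $c_{e_L,g}=0$ for all $g$, and likewise $c'_{e_L,g'}=0$. In other words, only nontrivial lamp configurations occur, and the indicators $\1_{f\neq e_L}$, $\1_{f'\neq e_L}$ in \eqref{eq:Phi2-first-order-fg} can be dropped. I will also record the support fact used below. If $f\neq e_L$ and $f\cdot(f'\circ h^{-1})=e_L$, then $\supp(f)=h+\supp(f')$ is nonempty, so $h\in\supp(f)-\supp(f')$. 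Hence $|h|\le n+m$.

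\textbf{Left-hand side.} We have $x^{(k)}=\sum c_{f,g}U_{f,g-k}$. By \eqref{eq:Phi2-non-closed-fg}, only the terms with $k=g$ and $l=g'$ survive. Because $|g|\le n\le R$ and $|g'|\le m\le S$, each pair $(f,g),(f',g')$ contributes exactly once, and \eqref{eq:Phi2-count-fg} yields
\[
\Phi^{(2)}(X^{(R)},Y^{(S)})=\sum c_{f,g}c'_{f',g'}\,\#\{h\in\Gamma:\ f\cdot(f'\circ h^{-1})=e_L\}.
\]

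\textbf{Right-hand side.} The product rule gives $U_{f,g-k}U_{f',g'-l}=U_{f\cdot(f'\circ(g-k)^{-1}),\,g-k+g'-l}$. Its trace is $1$ precisely when $l=g'+h$ and $f\cdot(f'\circ h^{-1})=e_L$, where $h:=g-k$. For fixed $(f,g),(f',g')$, the map $k\mapsto h$ is a bijection. The remaining task is to check that every $h$ with $f\cdot(f'\circ h^{-1})=e_L$ is actually reached inside the summation window.

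\textbf{Main obstacle: fitting $h$ into the window.} This is the only place where $R\ge 2n+m$ and $S\ge n+2m$ enter. By the support fact, $|h|\le n+m$. Then $|k|=|g-h|\le n+(n+m)\le R$ and $|l|=|g'+h|\le m+(n+m)\le S$. So $\Phi^{(1)}(X^{(R)}Y^{(S)})$ counts exactly the same set of $h$ with the same coefficients $c_{f,g}c'_{f',g'}$, and it equals the left-hand side.

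Care is needed at two points. First, the support argument requires $f\neq e_L$; that is exactly why hypothesis \eqref{eq:no-pure-shift} is needed, since otherwise $h$ would be unbounded and the left-hand side would be infinite or mismatched. Second, the shift convention $f'\circ h^{-1}$ must agree between \eqref{eq:Phi2-count-fg} and the product rule; it does, because both come from the same multiplication law on $G$.
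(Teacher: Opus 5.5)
Your proposal is correct and follows essentially the same route as the paper. You decode the hypothesis as the vanishing of all coefficients of $U_{e_L,g}$, evaluate $\Phi^{(2)}(X^{(R)},Y^{(S)})$ via Proposition~\ref{prop:Phi1-Phi2-fg} (only the terms $k=g$, $l=g'$ survive), and for $\Phi^{(1)}(X^{(R)}Y^{(S)})$ use the support bound $|h|\le n+m$ to show every admissible $h=g-k$ falls inside the summation window; the only cosmetic difference is that the paper first aggregates coefficients into $\Lambda_f=\sum_g\lambda_f^{(g)}$ and $M_{f'}$, whereas you keep the pairs $(f,g),(f',g')$ separate.
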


\begin{remark}\label{rem:remove-pure-shifts}
    The assumption in Equation \eqref{eq:no-pure-shift} is not restrictive. Indeed, if $x\in\mathcal E_{n}$, we can replace it by
    \begin{equation*}
        \overset{\circ}{x}:=x-\sum_{k=-n}^{n}
        \Phi^{(1)}\left(xU_{e_{L},-k}\right)U_{e_{L},k}.
    \end{equation*}
    Then $\overset{\circ}{x}\in\mathcal E_{n}$ and $\Phi^{(1)}(\overset{\circ}{x}U_{e_{L},k})=0$ for every $k\in\Z$. The same construction can be done for $y$.
\end{remark}

\begin{proof}
    We first write $x$ and $y$ in the family introduced above :
    \begin{equation*}
        x=\sum_{\substack{\diam(f)\leq n\\ |g|\leq n}}\lambda_{f}^{(g)}U_{f,g},\qquad y=\sum_{\substack{\diam(f')\leq m\\ |g'|\leq m}}\mu_{f'}^{(g')}U_{f',g'}.
    \end{equation*}
    The assumption in Equation \eqref{eq:no-pure-shift} means exactly that $\lambda_{e_{L}}^{(g)}=0,$ and $\mu_{e_{L}}^{(g')}=0,$ for every $g,g'\in\Z$. We set
    \begin{equation*}
        \Lambda_{f}:=\sum_{|g|\leq n}\lambda_{f}^{(g)},\qquad M_{f'}:=\sum_{|g'|\leq m}\mu_{f'}^{(g')}.
    \end{equation*}

    Since $R\geq n$, the coefficient of $U_{f,e_{\Gamma}}$ in $X^{(R)}$ is $\Lambda_{f}$. Similarly, the coefficient of $U_{f',e_{\Gamma}}$ in $Y^{(S)}$ is $M_{f'}$. By Proposition \ref{prop:Phi1-Phi2-fg} and by the bilinearity of $\Phi^{(2)}$, we obtain
    \begin{equation}\label{eq:Phi2-expanded-fg}
        \Phi^{(2)}\left(X^{(R)},Y^{(S)}\right) =\sum_{f\neq e_{L}}\sum_{f'\neq e_{L}}\Lambda_{f}M_{f'}\sum_{h\in\Gamma}\Phi^{(1)}\left(U_{f,h}U_{f',-h}\right).
    \end{equation}

    Let us now compute the right-hand side of Equation \eqref{eq:first-second-symmetrization}. A typical term in the product $X^{(R)}Y^{(S)}$ is $U_{f,g-k}U_{f',g'-\ell},$ where $|g|\leq n$, $|g'|\leq m$, $|k|\leq R$ and $|\ell|\leq S$. Assume that its first order contribution is non zero. By the product rule, we have $g-k+g'-\ell=0,$ and $f\cdot\left(f'\circ(g-k)^{-1}\right)=e_{L}.$ Let $h=g-k$, then $g'-l=-h$. Moreover, if $f$ or $f'$ is trivial, then we are in the case of Equation \eqref{eq:no-pure-shift} and the first order contribution of this term is zero. Furthermore, if $f,f'$ are non-trivial, the support conditions $\diam(f)\leq n$ and $\diam(f')\leq m$ imply $|h|\leq n+m.$ Indeed, the support of $f$ is the translate by $h$ of the support of $f'$. Hence $|k|=|g-h|\leq2n+m\leq R$, and $|\ell|=|g'+h|\leq n+2m\leq S.$ Hence, summing over $g,g',f$ and $f'$, gives
    \begin{equation*}
        \Phi^{(1)}\left(X^{(R)}Y^{(S)}\right)=\sum_{f\neq e_{L}}\sum_{f'\neq e_{L}}\Lambda_{f}M_{f'}\sum_{h\in\Gamma}\Phi^{(1)}\left(U_{f,h}U_{f',h^{-1}}\right),
    \end{equation*}
    which proves the proposition.
\end{proof}

Proposition \ref{prop: second-order distrib} shows that, after averaging over a sufficiently large set of positions of the lamplighter, the limiting covariance is recovered from the first order trace on $C_{r}^{*}(G)$. This is the operator-algebraic counterpart of the gluing mechanism which appeared in the proof of the central limit theorem.

    \section{Acknowledgments}

The core idea of this paper, namely the random matrix model of the lamplighter group, is originally from Benson Au. The author is thankful to him for sharing this idea and many others. The author would also like to thank Camille Male for numerous reviews and advice on this article.

    \bibliography{biblio_lamplighter.bib}

@article{DicksSchick2002,
  author = {Warren Dicks and Thomas Schick},
  title = {The spectral measure of certain elements of the complex group ring of a wreath product},
  journal = {Geometriae Dedicata},
  year = {2002},
  volume = {93},
  pages = {121--137},
  doi = {10.1023/A:1020381532489}
}

@article{Revelle2003,
  author = {David Revelle},
  title = {Heat kernel asymptotics on the lamplighter group},
  journal = {Electronic Communications in Probability},
  year = {2003},
  volume = {8},
  pages = {142--151}
}

@article{KambitesSilvaSteinberg2005,
 author = {Kambites, Mark and Silva, Pedro V. and Steinberg, Benjamin},
 title = {The spectra of lamplighter groups and {Cayley} machines.},
 fjournal = {Geometriae Dedicata},
 journal = {Geom. Dedicata},
 issn = {0046-5755},
 volume = {120},
 pages = {193--227},
 year = {2006},
 language = {English},
 doi = {10.1007/s10711-006-9086-8},
 zbMATH = {5053972},
 Zbl = {1168.20012}
}

@article{au_freeness_2021,
	title = {Freeness over the diagonal for large random matrices},
	volume = {49},
	issn = {0091-1798},
	doi = {10.1214/20-AOP1447},
	number = {1},
	journal = {The Annals of Probability},
	author = {Au, Benson and Cébron, Guillaume and Dahlqvist, Antoine and Gabriel, Franck and Male, Camille},
	year = {2021},
	pages = {157--179},
}

@book{nica_lectures_2006,
	series = {Lond. {Math}. {Soc}. {Lect}. {Note} {Ser}.},
	title = {Lectures on the combinatorics of free probability},
	volume = {335},
	isbn = {0-521-85852-6},
	url = {https://www.math.uni-sb.de/ag/speicher/publikationen/Nica-Speicher.pdf},
	publisher = {Cambridge: Cambridge University Press},
	author = {Nica, Alexandru and Speicher, Roland},
	year = {2006},
	doi = {10.1017/CBO9780511735127},
}

@book{mingo_free_2017,
	series = {Fields {Inst}. {Monogr}.},
	title = {Free probability and random matrices},
	volume = {35},
	isbn = {978-1-4939-6941-8 978-1-4939-6942-5},
	url = {https://mast.queensu.ca/~mingo/mingo_speicher_2017.pdf},
	publisher = {Toronto: The Fields Institute for Research in the Mathematical Sciences; New York, NY: Springer},
	author = {Mingo, James A. and Speicher, Roland},
	year = {2017},
	doi = {10.1007/978-1-4939-6942-5},
}

@book{male_traffic_2020,
	series = {Mem. {Am}. {Math}. {Soc}.},
	title = {Traffic distributions and independence: permutation invariant random matrices and the three notions of independence},
	volume = {1300},
	isbn = {978-1-4704-4298-9 978-1-4704-6399-1},
	url = {https://arxiv.org/pdf/1111.4662},
	publisher = {Providence, RI: American Mathematical Society (AMS)},
	author = {Male, Camille},
	year = {2020},
	doi = {10.1090/memo/1300},
}

@book{brown_ozawa_2008,
  author    = {Brown, Nathanial P. and Ozawa, Narutaka},
  title     = {{$C^*$}-Algebras and Finite-Dimensional Approximations},
  series    = {Graduate Studies in Mathematics},
  volume    = {88},
  publisher = {American Mathematical Society},
  address   = {Providence, RI},
  year      = {2008},
  isbn      = {978-0-8218-4381-9},
  doi       = {10.1090/gsm/088}
}

@book{paulsen_cb_2002,
  author    = {Paulsen, Vern I.},
  title     = {Completely Bounded Maps and Operator Algebras},
  series    = {Cambridge Studies in Advanced Mathematics},
  volume    = {78},
  publisher = {Cambridge University Press},
  address   = {Cambridge},
  year      = {2002},
  isbn      = {978-0-521-81669-4},
  doi       = {10.1017/CBO9780511546631}
}

@article{dykema_multilinear_2005,
    title = {Multilinear function series and transforms in free probability theory},
    journal = {Advances in Mathematics},
    volume = {208},
    number = {1},
    pages = {351-407},
    year = {2007},
    issn = {0001-8708},
    doi = {https://doi.org/10.1016/j.aim.2006.02.011},
    url = {https://www.sciencedirect.com/science/article/pii/S0001870806000703},
    author = {Kenneth J. Dykema}
}

@article{tomiyama_1957,
  author  = {Tomiyama, Jun},
  title   = {On the Projection of Norm One in {$W^*$}-Algebras},
  journal = {Proceedings of the Japan Academy},
  volume  = {33},
  number  = {10},
  pages   = {608--612},
  year    = {1957},
  doi     = {10.3792/pja/1195524885}
}

@article{male_limiting_2017,
	title = {The limiting distributions of large heavy {Wigner} and arbitrary random matrices},
	volume = {272},
	issn = {0022-1236},
	url = {https://www.sciencedirect.com/science/article/pii/S0022123616303044},
	doi = {https://doi.org/10.1016/j.jfa.2016.10.001},
	number = {1},
	journal = {Journal of Functional Analysis},
	author = {Male, Camille},
	year = {2017},
	pages = {1--46},
}

@misc{arras,
      title={Random Schr\"odinger operators and convolution on wreath products}, 
      author={Adam Arras},
      year={2025},
      note = {arXiv:math/2505.22485},
      eprint={2505.22485},
      archivePrefix={arXiv},
      primaryClass={math.PR},
      url={https://arxiv.org/abs/2505.22485}, 
}

@article{kesten_symmetric_1959,
	title = {Symmetric random walks on groups},
	volume = {92},
	issn = {0002-9947, 1088-6850},
	url = {https://www.ams.org/tran/1959-092-02/S0002-9947-1959-0109367-6/},
	doi = {10.1090/S0002-9947-1959-0109367-6},
	number = {2},
	urldate = {2024-10-14},
	journal = {Transactions of the American Mathematical Society},
	author = {Kesten, Harry},
	year = {1959},
	pages = {336--354},
}

@article{benaych-georges_central_2014,
	title = {Central {Limit} {Theorems} for {Linear} {Statistics} of {Heavy} {Tailed} {Random} {Matrices}},
	volume = {329},
	issn = {1432-0916},
	url = {https://doi.org/10.1007/s00220-014-1975-3},
	doi = {10.1007/s00220-014-1975-3},
	number = {2},
	journal = {Communications in Mathematical Physics},
	author = {Benaych-Georges, Florent and Guionnet, Alice and Male, Camille},
	month = jul,
	year = {2014},
	pages = {641--686},
}

@article{kammoun_small_2023,
    author = {Kammoun, Mohamed Slim and Ma{\"{\i}}da, Myl{\`e}ne},
    title = {A product of invariant random permutations has the same small cycle structure as uniform},
    fjournal = {Electronic Communications in Probability},
    journal = {Electron. Commun. Probab.},
    issn = {1083-589X},
    volume = {25},
    pages = {14},
    note = {Id/No 57},
    year = {2020},
    language = {English},
    doi = {10.1214/20-ECP334},
    zbMATH = {7252777},
    Zbl = {1469.60044}
}

@article{grigorchuk_lamplighter_2001,
	title = {The {Lamplighter} {Group} as a {Group} {Generated} by a 2-state {Automaton}, and its {Spectrum}},
	volume = {87},
	issn = {1572-9168},
	url = {https://doi.org/10.1023/A:1012061801279},
	doi = {10.1023/A:1012061801279},
	number = {1},
	journal = {Geometriae Dedicata},
	author = {Grigorchuk, Rostislav I. and Żuk, Andrzej},
	month = aug,
	year = {2001},
	pages = {209--244},
}

@article{mohar_survey_1989,
	title = {A survey on spectra of infinite graphs},
	volume = {21},
	issn = {0024-6093},
	doi = {10.1112/blms/21.3.209},
	language = {English},
	number = {3},
	journal = {Bulletin of the London Mathematical Society},
	author = {Mohar, Bojan and Woess, Wolfgang},
	year = {1989},
	pages = {209--234},
}

@article{grigorchuk_spectra_2019,
    author = {Grigorchuk, Rostislav and Simanek, Brian},
    year = {2020},
    month = {03},
    pages = {1},
    title = {Spectra of Cayley graphs of the lamplighter group and random Schrödinger operators},
    volume = {374},
    journal = {Transactions of the American Mathematical Society},
    doi = {10.1090/tran/8156}
}

@article{speicher_combinatorial_1998,
	title = {Combinatorial theory of the free product with amalgamation and operator-valued free probability theory},
	volume = {132},
	issn = {0065-9266, 1947-6221},
	url = {http://www.ams.org/memo/0627},
	doi = {10.1090/memo/0627},
	language = {en},
	number = {627},
	urldate = {2025-01-13},
	journal = {Memoirs of the American Mathematical Society},
	author = {Speicher, Roland},
	year = {1998},
	pages = {0--0},
}

@inproceedings{voiculescu_symmetries_1985,
	address = {Berlin, Heidelberg},
	title = {Symmetries of some reduced free product {C}*-algebras},
	isbn = {978-3-540-39514-0},
	doi = {10.1007/BFb0074909},
	language = {en},
	booktitle = {Operator {Algebras} and their {Connections} with {Topology} and {Ergodic} {Theory}},
	publisher = {Springer},
	author = {Voiculescu, Dan},
	editor = {Araki, Huzihiro and Moore, Calvin C. and Stratila, Şerban-Valentin and Voiculescu, Dan-Virgil},
	year = {1985},
	pages = {556--588},
}

@misc{accardi_decompositions_2006,
	title = {Decompositions of the free product of graphs},
	url = {http://arxiv.org/abs/math/0609329},
	doi = {10.48550/arXiv.math/0609329},
	urldate = {2025-01-16},
	publisher = {arXiv},
	author = {Accardi, Luigi and Lenczewski, Romuald and Salapata, Rafal},
	month = sep,
	year = {2006},
	note = {arXiv:math/0609329},
}

@article{speicher_universal_1997,
	title = {On universal products},
	volume = {12},
	doi = {10.1090/fic/012/12},
	author = {Speicher, Roland},
	month = jan,
	year = {1997},
	note = {ISBN: 9780821806753},
	pages = {257--266},
    journal = {Free Probability Theory}
}

@misc{bordenave2024largedeviationsmacroscopicobservables,
      title={Large deviations for macroscopic observables of heavy-tailed matrices}, 
      author={Charles Bordenave and Alice Guionnet and Camille Male},
      year={2024},
      note = {arXiv:math/2409.14027},
      eprint={2409.14027},
      archivePrefix={arXiv},
      primaryClass={math.PR},
      url={https://arxiv.org/abs/2409.14027}, 
}
    \bibliographystyle{plain} 

\end{document}